\DeclareMathOperator{\id}{id}
\DeclareMathOperator{\Out}{Out}
\newcommand{\bdy}{\ensuremath{\partial}}
\newcommand{\sph}[1]{\ensuremath{\mathbb{S}^{#1}}}
\newcommand{\iso}{\ensuremath{\cong}}
\newcommand{\Z}[1][]{\ensuremath{\mathbb{Z}_{#1}}}
\newcommand{\Q}{\ensuremath{\mathbb{Q}}}
\newcommand{\R}{\ensuremath{\mathbb{R}}}
\newcommand{\I}{\ensuremath{\mathbb{I}}}
\newcommand{\HR}{\ensuremath{\mathbb{H}^2\times\R}}
\newcommand{\ofg}[1][]{\ensuremath{\pi_1^{\text{orb}}#1}}
\newcommand{\gp}[1]{\ensuremath{\langle #1\rangle}}
\newtheorem{theorem}{Theorem}[section]
\newtheorem{prop}[theorem]{Proposition}
\newtheorem{lem}[theorem]{Lemma}
\newtheorem{clly}[theorem]{Corollary}
\newtheorem{question}[theorem]{Question}
\theoremstyle{definition}
\newtheorem{defn}[theorem]{Definition}
\newtheorem*{cnv}{Conventions}
\theoremstyle{remark}
\newtheorem*{rmk}{Remark}
\newtheorem{example}[theorem]{Example}
\theoremstyle{plain}
\newcounter{introthmcount}
\newenvironment{introthm}[1]{\\[1.9ex] {\bf Theorem\refstepcounter{introthmcount} \label{#1}\Alph{introthmcount}.} \em}{\em \\[2ex]}
\theoremstyle{definition}
\newcommand{\SFS}{Seifert fibre space}
\title{Profinite rigidity of graph manifolds, II: knots and mapping classes}
\author{Gareth Wilkes}
\begin{document}
\maketitle

\begin{abstract}
In this paper we study some consequences of the author's classification of graph manifolds by their profinite fundamental groups. In particular we study commensurability, the behaviour of knots, and relation to mapping classes. We prove that the exteriors of graph knots are distinguished among all 3-manifold groups by their profinite fundamental groups. We also prove a strong conjugacy separability result for certain mapping classes of surfaces.
\end{abstract}
\section*{Introduction}
There has been a recent slew of papers dealing with the properties of 3-manifold groups which may be detected via the finite quotients of the group, or equivalently via its profinite completion. Examples of major advances in this field include the detection of fibring for compact 3-manifolds by Jaikin-Zapirain \cite{Jaikin17}; the detection of the geometry of a closed 3-manifold by Wilton and Zalesskii \cite{WZ10}; the proof that each once-punctured torus bundle is {\em profinitely rigid} \cite{BRW16} (that is, it may be distinguished from all other 3-manifolds by the profinite completion of its fundamental group); and the classification of those Seifert fibre spaces with the same profinite completions of fundamental groups by the author \cite{Wilkes15}.

In the paper \cite{Wilkes16} the author proved a result classifying graph manifolds by the profinite completions of their fundamental groups. This classification (see Theorem \ref{GMrigid} of the present paper) takes the form of a finite list of numerical conditions which are easy to check for a given pair of graph manifolds when presented in a certain standard form. This present paper represents a continuation of \cite{Wilkes16}. A certain familiarity with at least Section 10 of \cite{Wilkes16} will be required. Initially we seek to shed light on the classification theorem by means of examples. In Section \ref{secComm} we prove the following corollary (Proposition \ref{commens}).
\begin{introthm}{introCommens}
Every closed orientable graph manifold has a finite-sheeted cover with profinitely rigid fundamental group. Hence if two graph manifold groups have isomorphic profinite completions, then they are commensurable.
\end{introthm} 
Following this we will use the techniques from \cite{Wilkes16} to investigate two other entities of great interest to low dimensional topologists---knots and mapping classes. In Section \ref{secKnots} we will study those knot exteriors in \sph{3}\ which are graph manifolds and prove that they are all determined by the profinite completions of their fundamental groups. Strikingly this result does not assume any condition on the behaviour with respect to the peripheral structures of the groups of the isomorphisms of profinite completions. The following appears as Theorem \ref{thmKnots}.
\begin{introthm}{introKnots}
Let $M_K$ be the exterior of a graph knot $K$. Let $N$ be another compact orientable 3-manifold and assume that $\widehat{\pi_1 M_K}\iso \widehat{\pi_1 N}$. Then $\pi_1 M_K\iso \pi_1 N$. In particular if $K$ is prime and $N$ is also a knot exterior then $N$ is homeomorphic to $M_K$.
\end{introthm}
Finally in Section \ref{secMCG} we use the behaviour of profinite completions of fibred graph manifolds to deduce the following result (Theorem \ref{thmMCG}) concerning mapping classes. Here a `piecewise periodic' mapping class is one for which the corresponding mapping torus is a graph manifold.
\begin{introthm}{introMCG}
If $\phi_1$ and $\phi_2$ are piecewise periodic automorphisms of a closed surface group $\pi_1 S$ which are not conjugate in $\Out(\pi_1 S)$, then $\phi_1$ is not conjugate to $\phi_2^{\kappa}$ in $\Out(\widehat{\pi_1 S})$ for any $\kappa \in\hat\Z{}^{\!\times}$.
\end{introthm}
The author is grateful to Marc Lackenby for carefully reading this paper, and to Michel Boileau for helpful and insightful conversations. The author was supported by the EPSRC.  
\begin{cnv}
In this paper, we will adopt the following conventions.
\begin{itemize}
\item Generally speaking, discrete groups will generally be given Roman letters $A,G,H,...$. 
\item A finite graph of spaces will be denoted ${\cal M}= (X,M_\bullet)$ where $X$ is a finite graph and $M_\bullet$ will be an edge or vertex space. Similarly for graphs of groups.
\item For us, a graph manifold will be required to be non-geometric, i.e. not a single \SFS{} or a Sol-manifold, hence not a torus bundle. All 3-manifolds will be orientable.
\item For two elements $g,h$ of a group, $g^h$ will denote $h^{-1}gh$. That is, conjugation will be a right action.
\item We will sometimes shorten the phrase `profinite completion of the fundamental group' to `profinite fundamental group'. In such cases we may use the term `discrete fundamental group' to mean the fundamental group itself.
\end{itemize}
\end{cnv}
\section{Results from previous papers}
In this section we shall recall those results and notions from \cite{Wilkes16} and other papers which will be required for our discussions. We begin with some information and conventions concerning 2-orbifolds and Seifert fibre spaces.

\begin{defn}[Definition 2.6 of \cite{Wilkes16}]\label{exauto}
Let $O$ be an orientable 2-orbifold of genus $g$ with $s+1$ boundary components and $r$ cone points, with fundamental group
\[B= \ofg[O]=\left< a_1,\ldots, a_r, e_1, \ldots, e_s, u_1, v_1, \ldots, u_g,v_g\mid a_i^{p_i}\right>\]
where the boundary components of $O$ are represented by the conjugacy classes of the elements $e_1, \ldots, e_s$ together with
\[ e_0 = \left(a_1 \cdots a_r e_1\cdots e_s [u_1, v_1]\cdots [u_g,v_g]\right)^{-1} \]
Let $\mu\in\widehat\Z{}^{\!\times}$. An {\em exotic automorphism of $O$ of type $\mu$} is an automorphism $\psi\colon\widehat B\to\widehat B$ such that $\psi(a_i) \sim a_i^\mu$ and $\psi(e_i)\sim e_i^\mu$ for all $i$, where $\sim$ denotes conjugacy in $\widehat B$.
\end{defn}
There is a corresponding notion for non-orientable orbifolds, but this will not be relevant to this paper.
\begin{prop}[Proposition 10.5 of \cite{Wilkes16}]\label{orbexauto}
Let $O$ be an orientable 2-orbifold with boundary. Then $O$ admits an exotic automorphism of type $\mu$ for any $\mu\in\widehat\Z$. Moreover this automorphism may be induced by an automorphism of the orbifold $\mathring O$ obtained from $O$ by removing a small disc about each cone point.
\end{prop}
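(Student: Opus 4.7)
The plan is to construct the desired exotic automorphism on the profinite completion of the free group $\pi_1(\mathring O)$ and then push it down to $\widehat B$ along the natural surjection.

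First I would set up the lifted problem. Since $\mathring O$ is an orientable surface of genus $g$ with $r+s+1$ boundary components (the $r$ new ones coming from removing small discs about cone points), $F:=\pi_1(\mathring O)$ is free of rank $2g+r+s$, with a free basis $\{a_1,\ldots,a_r,e_1,\ldots,e_s,u_1,v_1,\ldots,u_g,v_g\}$; the remaining boundary loop $e_0$ is expressed by the usual surface-polygon word. The quotient $F\twoheadrightarrow B$ with kernel the normal closure of $\{a_i^{p_i}\}$ induces a surjection $\widehat F \twoheadrightarrow \widehat B$. Any automorphism $\tilde\psi$ of $\widehat F$ satisfying $\tilde\psi(a_i)\sim a_i^\mu$ and $\tilde\psi(e_j)\sim e_j^\mu$ for every $j$ (including $j=0$) sends $a_i^{p_i}$ to a conjugate of $(a_i^{p_i})^\mu$, which lies in the closed normal subgroup generated by $a_i^{p_i}$; applying the same observation to $\tilde\psi^{-1}$ (which is of type $\mu^{-1}$) shows that $\tilde\psi$ descends to an automorphism $\psi$ of $\widehat B$ with the required peripheral properties, establishing the ``moreover'' clause.

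The core work is to build $\tilde\psi$. My approach is to define it by sending each free basis element $x$ to a conjugate of $x^\mu$, choosing the conjugators so that the product word $a_1\cdots a_r e_1\cdots e_s[u_1,v_1]\cdots[u_g,v_g]$ — whose inverse is $e_0$ — is also mapped to a conjugate of its $\mu$-th power. I would carry this out inductively using a pants decomposition of $\mathring O$: the base cases (an annulus and a pair of pants) reduce to small free profinite groups where the required automorphism can be written down explicitly, and the inductive step glues the constructions across an essential separating curve using the amalgamated-free-product structure of $\widehat F$, matching the pre-assigned conjugators along the amalgamating boundary class. Alternatively, one can work directly at the level of the inverse system of finite quotients, approximating $\mu$ in each quotient by a suitable integer $m$ and realising the required automorphism geometrically via a $\mu$-power twist along the boundary collars, then passing to the inverse limit.

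The principal obstacle is the simultaneous compatibility condition on all $r+s+1$ peripheral classes: no discrete automorphism of a free group can raise every peripheral element to a common non-trivial power (this fails already on $H_1$ for $\mu \neq \pm 1$), so the argument must be genuinely profinite, exploiting the flexibility of conjugators in $\widehat F$ to absorb the discrepancy between the product of $\mu$-th powers and the $\mu$-th power of the product. Getting this bookkeeping right across the pants decomposition — or, equivalently, getting the finite-quotient constructions to assemble coherently into an inverse limit — is the technical heart of the argument.
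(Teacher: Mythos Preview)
This paper does not contain a proof of Proposition~\ref{orbexauto}: it appears in Section~1 (``Results from previous papers'') as a quotation of Proposition~10.5 of the author's earlier paper \cite{Wilkes16}, and no argument is reproduced here. There is therefore nothing in the present paper against which to compare your proposal.

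For what it is worth, your reduction to the surface $\mathring O$ and the free group $F$, and your observation that the construction must be genuinely profinite (since no discrete automorphism can raise every peripheral class to a common power $\mu\neq\pm1$), are both correct and in the spirit of the original argument. However, your sketch leaves the decisive step as a bare assertion. Already in your base case of a pair of pants one needs an automorphism of the free profinite group $\widehat{\langle x,y\rangle}$ sending each of $x$, $y$ and $xy$ to a conjugate of its $\mu$-th power; the naive choice $x\mapsto x^\mu$, $y\mapsto y^\mu$ does not achieve this, since $x^\mu y^\mu$ is not in general conjugate to $(xy)^\mu$, so something further is required and you have not indicated what. Your alternative via finite quotients is likewise only a slogan: ``a $\mu$-power twist along the boundary collars'' does not name an actual automorphism of the relevant finite groups, and the coherence of such maps in the inverse limit is exactly the issue. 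The ``bookkeeping'' you allude to is precisely the content of the proof, and none of it is supplied here.
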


\begin{theorem}[Theorem 2.7 of \cite{Wilkes16}; adaption of Theorems 5.8 and 5.9 of \cite{Wilkes15}]\label{Wilkeswbdy}
Let $M$ and $N$ be Seifert fibre spaces whose boundary components  are $\bdy M_1,\ldots,\bdy M_n$ and $\bdy N_1,\ldots,\bdy N_n$ respectively. Suppose $\Phi$ is an isomorphism of group systems
\[ \Phi\colon (\widehat{\pi_1 M};\widehat{\pi_1\bdy M_1},\ldots, \widehat{\pi_1\bdy M_n})\to (\widehat{\pi_1 N};\widehat{\pi_1\bdy N_1},\ldots, \widehat{\pi_1\bdy N_n}) \]
(where we allow the replacement of peripheral subgroups by conjugates). Then:
\begin{itemize}
\item If $M$ is a twisted $\I$-bundle over the Klein bottle or a copy of $\sph{1}\times\sph{1}\times\I$, then $M=N$.
\item Otherwise, the base orbifolds of $M$ and $N$ may be identified with the same orbifold $O$ such that $\Phi$ splits as an isomorphism of short exact sequences
\[\begin{tikzcd}
1\ar{r} & \widehat\Z\ar{r} \ar{d}{\cdot \lambda} & \widehat{\pi_1 M} \ar{r} \ar{d}{\Phi} & \widehat{\ofg[O]} \ar{r} \ar{d}{\phi} & 1 \\
1\ar{r} & \widehat\Z\ar{r} & \widehat{\pi_1 N} \ar{r} & \widehat{\ofg[O]} \ar{r} & 1
\end{tikzcd}\]
where $\lambda$ is some invertible element of $\widehat\Z$ and $\phi$ is an exotic  automorphism of $O$ of type $\mu$.

Hence if $N$ is a surface bundle over the circle with fibre a hyperbolic surface $\Sigma$ with periodic monodromy $\psi$, then $M$ is also such a surface bundle with monodromy $\psi^k$ where $k$ is congruent to $\kappa=\mu^{-1}\lambda$ modulo the order of $\psi$.   
\end{itemize} 
\end{theorem}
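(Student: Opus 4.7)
The plan is to split the argument into exceptional and generic cases and, in the generic case, to extract a canonical fibre subgroup of the profinite completion that $\Phi$ must respect. For the exceptional Seifert fibre spaces $\sph{1}\times\sph{1}\times\I$ and the twisted $\I$-bundle over the Klein bottle, $\pi_1 M$ is virtually abelian and admits inequivalent Seifert fibrations, so there is no intrinsic fibre. These cases can be handled by direct computation: from the profinite completion together with the peripheral subgroups one recovers the abelianisation, torsion and first cohomology with $\F_2$-coefficients, and a short case check on SFS with one or two torus boundary components rules out all candidates other than $M$ itself.

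For a non-exceptional orientable SFS $M$ with boundary, the key task is to show that the discrete fibre subgroup $F\iso \Z$, fitting in $1\to F\to \pi_1 M\to \ofg[O]\to 1$, extends to a subgroup $\widehat F\iso \widehat\Z$ of $\widehat{\pi_1 M}$ which is characteristic relative to the peripheral structure. A natural candidate description is the maximal closed normal procyclic subgroup meeting every peripheral subgroup nontrivially. Confirming this requires the cohomological goodness of $\pi_1 M$ (to lift statements about normal subgroups from the discrete setting to the profinite completion), together with a ruling-out of competing procyclic normal subgroups using the structure of $\widehat{\ofg[O]}$. Once $\widehat F$ is identified as characteristic in this sense, $\Phi$ must map $\widehat F_M$ to $\widehat F_N$, inducing an isomorphism $\phi$ on quotients and giving the commutative diagram, with $\Phi|_{\widehat F}$ equal to multiplication by a unit $\lambda\in\widehat\Z{}^\times$.

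It then remains to identify the two base orbifolds and check that $\phi$ is exotic of some type $\mu$. I would invoke the profinite rigidity of 2-orbifolds with boundary established in \cite{Wilkes15} to identify $O_M$ with $O_N$ as a common orbifold $O$. The action of $\phi$ on the peripheral conjugacy class $[e_i]$ is dictated by the action of $\Phi$ on the image of $\widehat{\pi_1\bdy M_i}$ modulo $\widehat F$, and must produce a single unit $\mu$ uniformly across all $i$ because $\Phi$ is one global isomorphism and the fibres of distinct boundary components are conjugate inside $\widehat{\pi_1 M}$. For the cone point generators $a_j$, one localises to a neighbourhood of each cone point in $O$ and applies Proposition \ref{orbexauto} to see that $\phi(a_j)$ must be conjugate to $a_j^\mu$ with the same $\mu$, proving $\phi$ is exotic of type $\mu$.

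The main obstacle will be pinning down $\widehat F$ as intrinsic to the group system; once this is secured, the remaining identifications are diagram chases combined with the standard machinery of exotic automorphisms. The final clause on periodic monodromy is then a direct consequence: if $N$ is a surface bundle with periodic monodromy $\psi$ of order $d$, the $\sph{1}$-direction provides a preferred fibre generator in $\widehat{\pi_1 N}$, and pulling this generator back through $\Phi$ expresses the fibre generator of $M$ so that $M$ becomes the mapping torus of $\psi^k$ for any integer $k$ with $k\equiv \kappa = \mu^{-1}\lambda \pmod d$.
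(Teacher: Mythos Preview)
The paper does not prove this theorem; it is quoted in Section~1 (``Results from previous papers'') from \cite{Wilkes16} and \cite{Wilkes15}, so there is no proof here to compare against. Your outline follows the broad shape of those references---isolate the fibre subgroup as canonical, pass to the quotient, identify the base orbifolds, and analyse the induced automorphism---but two steps are not adequately justified.

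First, your argument that a single unit $\mu$ governs all peripheral and cone-point elements is not sound as written. The sentence ``the fibres of distinct boundary components are conjugate inside $\widehat{\pi_1 M}$'' is vacuous: the fibre class $h$ is one central element, so this says nothing about the exponents appearing on the $e_i$. In the actual arguments of \cite{Wilkes15} one writes the restriction of $\Phi$ to each peripheral $\widehat\Z^2$ as a matrix in the $(h,e_i)$ basis and uses the Seifert relations $a_j^{p_j}h^{q_j}=1$ together with the comparison of Seifert data for $M$ and $N$ to force the exponents to agree; this is genuine work, not a diagram chase.

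Second, your invocation of Proposition~\ref{orbexauto} for the cone points is a misuse: that proposition establishes the \emph{existence} of exotic automorphisms of a given type, and says nothing about an arbitrary automorphism being exotic. To handle the $a_j$ you should instead argue that maximal finite subgroups of $\widehat{\ofg[O]}$ are, up to conjugacy, exactly the $\langle a_j\rangle$, so $\phi(a_j)$ is conjugate to some $a_{j'}^{\nu_j}$; matching orders and then comparing with the Seifert relations pins down $\nu_j$. Finally, note that in the orientable-base case the fibre subgroup is simply the centre of $\widehat{\pi_1 M}$ (as used later in this paper in the proof of Theorem~\ref{thmKnots}, citing Theorem~5.5 of \cite{Wilkes15}); your proposed characterisation works but is more elaborate than necessary.
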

In this theorem, as elsewhere, we follow the convention that the short exact sequence is the profinite completion of a short exact sequence for the discrete fundamental group, so that the generator $1\in\widehat\Z$ is the homotopy class of a regular fibre rather than an arbitrary generator of $\widehat\Z$.
\begin{defn}\label{defHempelpair}
If $M,N$ are as in the latter case of the above theorem, we say that $(M,N)$ is a {\em Hempel pair of scale factor $\kappa$}, where $\kappa= \mu^{-1}\lambda$. Note that $\kappa$ is only well-defined modulo the order of $\psi$, which may be taken to be the lowest common multiple of the orders of the cone points of $M$. Note that a Hempel pair of scale factor $\pm 1$ is a pair of homeomorphic \SFS{}s.
\end{defn}

Finally we state the results from \cite{Wilkes16} concerning graph manifolds. The reader is warned that at certain points in this paper we will be needing details from the proofs of these theorems as well as just the statements.

\begin{theorem}[Theorem 6.2 of \cite{Wilkes16}]\label{decompfixed}
Let $M$ and $N$ be closed graph manifolds with respective JSJ decompositions $(X,M_\bullet)$ and $(Y,N_\bullet)$. Assume that there is an isomorphism $\Phi\colon \widehat{\pi_1 M}\to \widehat{\pi_1 N}$. Then the isomorphism $\Phi$ induces an isomorphism of JSJ decompositions in the following sense:
\begin{itemize}
\item there is a graph isomorphism $\varphi\colon X\to Y$;
\item $\Phi$ restricts to an isomorphism from $\widehat{\pi_1 M_x}$ to a conjugate of $\widehat{\pi_1 N_{\varphi(x)}}$ for every $x\in V(X)\cup E(X)$.
\end{itemize}
After performing an automorphism of $\widehat{\pi_1 N}$, one may eliminate the conjugacy in the last point.
\end{theorem}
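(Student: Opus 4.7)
The plan is to apply profinite Bass--Serre theory to the JSJ decompositions of $M$ and $N$. By the efficiency result of Wilton--Zalesskii for closed graph manifolds, the JSJ graph of groups $(X,\pi_1 M_\bullet)$ lifts to a finite graph of profinite groups whose fundamental group is $\widehat{\pi_1 M}$; equivalently, $\widehat{\pi_1 M}$ acts on a profinite Bass--Serre tree $T_M$ whose vertex and edge stabilisers are conjugates of $\widehat{\pi_1 M_x}$ for $x$ a vertex or edge of $X$, and the same holds for $N$ with tree $T_N$.

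The central task is to recognise the JSJ vertex and edge subgroups of $\widehat{\pi_1 M}$ purely algebraically. Each JSJ vertex subgroup carries a distinguished procyclic normal subgroup (the closure of the regular fibre) with quotient the profinite completion of an orientable 2-orbifold group, and should be characterised as a maximal subgroup of this form; edge subgroups are the $\widehat\Z^2$ subgroups appearing as intersections of adjacent vertex subgroups. Once such an intrinsic characterisation is in place, the image under $\Phi$ of each vertex subgroup of $\widehat{\pi_1 M}$ is, up to conjugacy in $\widehat{\pi_1 N}$, a vertex subgroup of the decomposition of $N$, and similarly for edge subgroups. The incidence relations inherited from the actions on $T_M$ and $T_N$ then supply a graph isomorphism $\varphi\colon X\to Y$ compatible with the matching of subgroups, and a final conjugating automorphism of $\widehat{\pi_1 N}$ absorbs the residual basepoint indeterminacy.

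The main technical obstacle is the intrinsic recognition step together with the rigidity of adjacency: one must rule out unexpected coincidences between conjugates of distinct vertex subgroups, and ensure that two vertex subgroups whose conjugates share a common $\widehat\Z^2$ edge subgroup really do correspond to adjacent JSJ pieces. This is where efficiency of the decomposition is essential, guaranteeing that $T_M$ is a genuine profinite tree so that the standard Bass--Serre arguments (uniqueness of minimal invariant subtrees, fixed-point behaviour of torus subgroups, etc.) carry over. It also requires careful control of centralisers in Seifert piece profinite groups, so that the fibre direction is canonical within each vertex group and the peripheral $\widehat\Z^2$ subgroups of adjacent pieces are recognisably identified.
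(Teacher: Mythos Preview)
This statement is not proved in the present paper: it is quoted verbatim as Theorem~6.2 of \cite{Wilkes16} in the section ``Results from previous papers'', and no argument is supplied here. So there is no proof in this paper against which to compare your proposal.

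That said, your outline is broadly consonant with the strategy of the cited reference. The key inputs you name---efficiency of the JSJ decomposition in the profinite topology (Wilton--Zalesskii), the resulting action of $\widehat{\pi_1 M}$ on a profinite Bass--Serre tree, and an intrinsic algebraic characterisation of the vertex and edge stabilisers---are indeed the engine of the argument. Your identification of the delicate points (ruling out coincidences between conjugates of distinct vertex groups, and recognising adjacency from shared $\widehat{\Z}^2$ subgroups) is accurate. What your sketch leaves as a black box is precisely the hardest step: the actual proof that a maximal ``Seifert-type'' closed subgroup of $\widehat{\pi_1 M}$ must be a conjugate of some $\widehat{\pi_1 M_x}$, and not something exotic assembled across several pieces. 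In \cite{Wilkes16} this is handled via the structure theory of profinite trees (fixed-point theorems for abelian and procyclic-by-orbifold subgroups acting on $T_M$) together with the analysis of centralisers in profinite Seifert fibre space groups carried out in \cite{Wilkes15}. Your proposal gestures at these ingredients but does not execute them; as a roadmap it is correct, but as a proof it would need those arguments filled in.
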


We will henceforth restrict attention to those graph manifolds whose vertex spaces have orientable base orbifolds. Let us recall how one obtains numerical invariants of a graph manifold $M$. Let the JSJ decomposition be $(X, M_\bullet)$. In this section we shall adopt the convention (from Serre) that each `geometric edge' of a finite graph is a pair $\{e,\bar e\}$ of oriented edges, with $\bar e$ being the `reverse' of $e$. Fix presentations in the standard form
\[\big< a_1,\ldots, a_r, e_1, \ldots, e_s, u_1, v_1, \ldots, u_g, v_g, h\,\big|\, a_i^{p_i} h^{q_i}, h \text{ central}\, \big>\]
for each $M_x\, (x\in V(X))$. This determines an ordered basis $(h, e_i)$ for the fundamental group of each boundary torus of $M_x$, where the final boundary component is described by 
\[e_0 = (a_1\cdots a_r e_1\cdots e_s [u_1, v_1]\cdots[u_g, v_g])^{-1}\]
Then with these bases the gluing map along an edge $e$ (from the boundary component of $M_{d_0(e)}$ to that of $M_{d_1(e)}$) takes the form of a matrix (acting on the left of a column vector) 
\[\begin{pmatrix}
\alpha(e) & \beta(e)\\ \gamma(e) & \delta(e)
\end{pmatrix} \] 
where $\gamma(e)$, the intersection number of the fibre of $d_0(e)$ with that of $d_1(e)$, is non-zero by the definition of a graph manifold. The number $\gamma(e)$ is well-defined up to a choice of orientation of the fibres of the two vertex groups. This matrix has determinant $-1$ from the requirement that the graph manifold be orientable. Once an orientation of the fibre and base are fixed, the number $\delta(e)$ becomes independent of the choice of presentation, modulo $\gamma(e)$. Changing these orientations multiplies the matrix by $-1$. The precise values of the $\delta(e)$ and $q_i$ may be changed by arbitrary integer multiples of $\gamma(e)$ and $p_i$ using `Dehn twists' of the form
\[ e_i\mapsto e_i h,\,e_j\mapsto e_jh^{-1} \text{ or } e_i\mapsto e_i h^{\pm1},\,a_j\mapsto a_jh^{\mp1}\]
These operations however leave the {\em total slope}
\[ \tau(x) = \sum_{d_0(e)=x} \frac{\delta(e)}{\gamma(e)}  - \sum \frac{q_i}{p_i} \]
of the vertex space invariant. Note also that these quantities are all invariant under the conjugation action of the group on itself, i.e.\ it does not matter which conjugate of each edge or vertex group we consider.

\begin{theorem}[Theorem 10.9 of \cite{Wilkes16}]\label{GMrigid}
	Let $M$ and $N$ be closed orientable graph manifolds with JSJ decompositions $(X, M_\bullet)$ and $(Y, N_\bullet)$ respectively, in which all Seifert fibred pieces have orientable base orbifold. Suppose $M$ and $N$ are not homeomorphic.
	 \begin{enumerate}
		\item If $X$ is not bipartite, then $\pi_1 M$ is profinitely rigid.
		\item If $X$ is bipartite, on two sets $R$ and $B$, then $\pi_1 M$ and $\pi_1 N$ have isomorphic profinite completions if and only if, for some choices of orientations on fibre subgroups, there is a graph isomorphism $\phi\colon X\to Y$ and some $\kappa\in\widehat{\Z}{}^{\!\times}$ such that:
	\begin{enumerate}
\item[(a)] For each edge $e$ of $X$, $\gamma(\phi(e))=\gamma(e)$
\item[(b)] The total slope of every vertex space of $M$ or $N$ vanishes
\item[(c1)] If $d_0(e)=r\in R$ then $\delta(\phi(e))=\kappa \delta(e)$ modulo $\gamma(e)$, and $(M_r, N_{\phi(r)})$ is a Hempel pair of scale factor $\kappa$.
\item[(c2)] If $d_0(e)=b\in B$ then $\delta(\phi(e))= \kappa^{-1} \delta(e)$ modulo $\gamma(e)$, so that $(M_b, N_{\phi(b)})$ is a Hempel pair of scale factor $\kappa^{-1}$. 
	\end{enumerate}
	\end{enumerate}
\end{theorem}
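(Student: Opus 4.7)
The plan is to combine Theorems \ref{decompfixed} and \ref{Wilkeswbdy} with a matrix computation on edge tori. Suppose $\Phi\colon \widehat{\pi_1 M}\to\widehat{\pi_1 N}$ is an isomorphism. Theorem \ref{decompfixed} supplies a graph isomorphism $\varphi\colon X\to Y$ and, after composing with an inner automorphism of $\widehat{\pi_1 N}$, restrictions $\Phi_v\colon \widehat{\pi_1 M_v}\to\widehat{\pi_1 N_{\varphi(v)}}$ that preserve every peripheral subgroup. Theorem \ref{Wilkeswbdy} then describes each $\Phi_v$ by a pair $(\lambda_v,\mu_v)\in(\widehat\Z{}^{\!\times})^2$: $\Phi_v$ scales the regular fibre by $\lambda_v$ and descends to an exotic automorphism of the common base orbifold of type $\mu_v$.

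The next step is to extract edge-by-edge constraints. At an edge $e$ of $X$ with $d_0(e)=v$ and $d_1(e)=w$, the action of $\Phi$ on the edge torus is an automorphism of $\widehat\Z^2$ which, in the standard $v$-side basis (fibre followed by boundary section), is upper-triangular with diagonal $(\lambda_v,\mu_v)$, and likewise on the $w$-side with diagonal $(\lambda_w,\mu_w)$. Conjugating by the gluing matrix $\left(\begin{smallmatrix}\alpha(e)&\beta(e)\\ \gamma(e)&\delta(e)\end{smallmatrix}\right)$, of determinant $-1$, and comparing entries forces
\[\lambda_w=\mu_v,\qquad \mu_w=\lambda_v,\qquad \gamma(\varphi(e))=\gamma(e),\]
together with a congruence of the form $\delta(\varphi(e))\equiv \mu_v^{-1}\lambda_v\,\delta(e)\pmod{\gamma(e)}$. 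Writing $\kappa_v=\mu_v^{-1}\lambda_v$, the swap rule yields $\kappa_w=\kappa_v^{-1}$: the scale factor inverts across every edge. If $X$ is bipartite on $R\sqcup B$, a single element $\kappa\in\widehat\Z{}^{\!\times}$ (with $\kappa_v=\kappa$ on $R$ and $\kappa^{-1}$ on $B$) is consistent, yielding (c1) and (c2). If instead $X$ contains an odd cycle we obtain $\kappa_v^2=1$ at some vertex, and a more careful analysis of the gluings together with the torsion constraints on $\kappa_v$ coming from the cone-point orders of $M_v$ forces $\kappa_v=\pm1$ throughout; since a Hempel pair of scale $\pm1$ is a pair of homeomorphic Seifert pieces, this assembles to $M\iso N$ and case (1) follows.

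Condition (b) arises from the central extension $1\to\widehat\Z\to\widehat{\pi_1 M_v}\to\widehat{\ofg[O_v]}\to 1$: the local Euler class of $M_v$, encoded by $\tau(v)$ together with the gluings, is rescaled under $\Phi_v$ by a factor depending on $(\lambda_v,\mu_v)$, and matching the two sides forces $\tau(v)=0$ at each vertex. For the converse direction, given numerical data satisfying (a)--(c2), I would construct $\Phi$ explicitly: Proposition \ref{orbexauto} supplies an exotic automorphism of the base orbifold of each $M_v$ of the prescribed type, which lifts to an automorphism of $\widehat{\pi_1 M_v}$ with the prescribed fibre scaling precisely when (b) holds, and the gluing conditions (a), (c1), (c2) make these vertex-level isomorphisms agree on each edge torus. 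The main technical obstacle---and the bulk of the original proof in \cite{Wilkes16}---is to verify that such a piecewise-compatible system actually assembles to a global isomorphism of $\widehat{\pi_1 M}$; this requires the profinite Bass--Serre machinery applied to the JSJ graph of groups, and ultimately the goodness (in the sense of Serre) of that graph of groups.
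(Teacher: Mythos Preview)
The present paper does not prove this theorem: it is quoted as Theorem 10.9 of \cite{Wilkes16} in the section ``Results from previous papers'', with only a remark noting that the statement has been simplified to the case of orientable base orbifolds. There is therefore no proof in this paper to compare against.

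That said, your outline is broadly faithful to the strategy the paper relies on (and which is carried out in \cite{Wilkes16}): combine Theorems \ref{decompfixed} and \ref{Wilkeswbdy}, read off the pair $(\lambda_v,\mu_v)$ at each vertex, and use the edge-torus matrix equation to obtain the swap rule $\lambda_w=\mu_v$, $\mu_w=\lambda_v$ together with the congruences on $\delta$. Two points deserve tightening. First, in the non-bipartite case your deduction ``$\kappa_v^2=1$'' is weaker than what the swap rule actually gives: tracking $\lambda$ and $\mu$ separately around an odd cycle yields $\lambda_v=\pm\mu_v$ directly, hence $\kappa_v=\pm1$. This matters because $\kappa^2=1$ in $\widehat\Z{}^{\!\times}$ does \emph{not} by itself force $\kappa=\pm1$, and the ``torsion constraints from cone-point orders'' you invoke are not the correct lever here. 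Second, condition (b) comes from the relation $\tau(\varphi(v))=\kappa_v\,\tau(v)$ between rational total slopes, together with the fact (Lemma 2.2 of \cite{Wilkes16}) that a unit of $\widehat\Z$ which multiplies a nonzero rational to a rational must equal $\pm1$; your ``Euler class rescaling'' gesture is the right intuition but should be made precise in this way. For the converse, you are right that one builds $\Phi$ vertex-by-vertex via Proposition \ref{orbexauto} and then assembles; however, ``goodness in the sense of Serre'' is not the pertinent tool---what is needed is that the JSJ splitting is \emph{efficient} in the profinite topology (so that $\widehat{\pi_1 M}$ is itself the profinite fundamental group of the graph of groups on the completed vertex and edge groups), which is what allows compatible vertex isomorphisms to globalise.
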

\begin{rmk}
In the present paper we shall only need to deal with manifolds whose Seifert fibred pieces have orientable base orbifold, so we have cleaned up this statement slightly from Theorem 10.9 of \cite{Wilkes16} which deals with the general case.
\end{rmk}
\section{Examples}\label{secEgs}
We now give some simple illustrative examples to demonstrate some of the phenomena that may occur in consequence of Theorem \ref{GMrigid}. For some of the examples we will also explicitly describe the isomorphisms of profinite completions.
\begin{example}[Changing a vertex space]\label{ExChVxSp}
Fix a positive integer $p$ and let $0<q<p/2$ be coprime to $p$. Consider the following family of graph manifold groups, whose vertex groups for different values of $q$ are not isomorphic. The manifolds themselves are schematically illustrated in Figure \ref{FigChVxSp}.
\begin{eqnarray*}
G_q &=& \big< a_1,a_2,u,v,h,u',v',h'\,\big|\, a_1^p h^q, a_2^p h^{-q}, e'=h, h'=e,\\ 
&&\qquad\qquad [h,a_1], [h,a_2], [h,u], [h,v], [h', u'], [h', v']\big>\\
&=& \gp{a_1,a_2,u,v,h} \ast_{\Z^2} \gp{u',v',h'}
\end{eqnarray*} 
where $e=(a_1 a_2[u,v])^{-1}$ and $e'=[u',v']^{-1}$. One may readily check that the conditions of Theorem \ref{GMrigid} are satisfied so that all these groups as $q$ varies have isomorphic profinite completions. The isomorphism from $\widehat G_1$ to $\widehat G_q$ defined in the theorem may be described as follows. Let $\kappa= q+p\rho$ be an element of $\widehat\Z {}^{\!\times}$ congruent to $q$ modulo $p$.
Let $\phi$ be an automorphism of the free profinite group on the generators $u'$ and $v'$ such that 
\[\phi([u',v']) = [u', v']^\kappa\] i.e.\ an exotic automorphism of type $\kappa$ of a once-punctured torus, which exists by Proposition \ref{orbexauto}. Then define $\Phi\colon\widehat G_1\to\widehat G_q$ by sending $u'$ and $v'$ to their images under $\phi$, by mapping
\[h\mapsto h^\kappa, \quad a_1\mapsto a_1 h^{-\rho},\quad a_2\mapsto a_2 h^\rho \] 
and by the `identity' on all other generators. The reader may readily verify that this gives a well-defined surjection of profinite groups. As argued in the proof of the theorem it is in fact an isomorphism. This may also be seen from the fact that the map so given is an isomorphism of graphs of profinite groups.
\end{example}
\begin{figure}[htp]
\centering
\includegraphics[scale=0.6]{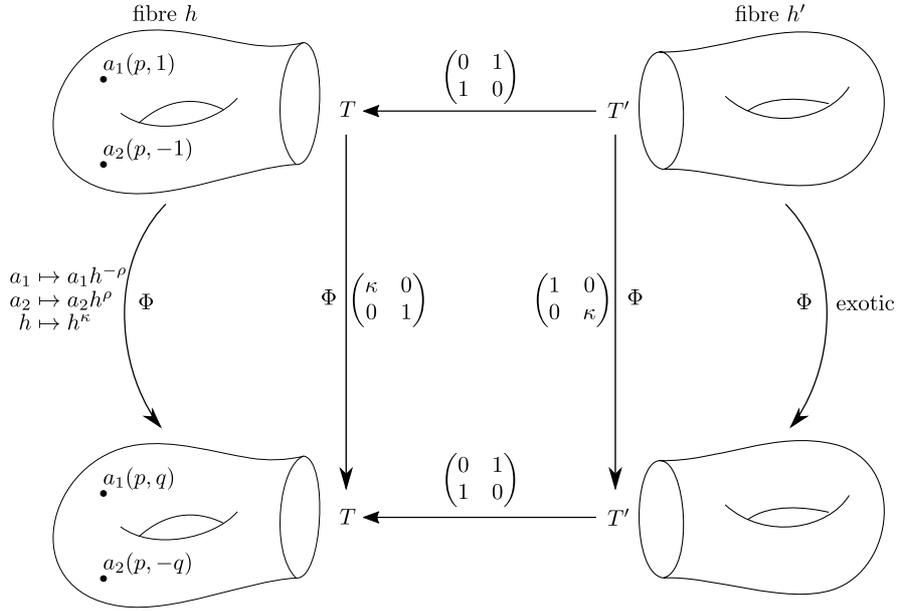}
\caption{A schematic depiction of the graph manifolds considered in Example \ref{ExChVxSp}. Here we represent each Seifert fibred piece as a surface with some marked points, where the marked points represent exceptional fibres and are labelled with the Seifert invariants of that fibre. Each boundary subgroup $T^{(\prime)}$ is given the ordered basis $(h^{(\prime)}, e^{(\prime)})$.   }
\label{FigChVxSp}
\end{figure}
\begin{example}[Changing a gluing map]\label{ExChGlMp}
Consider the two graph manifolds depicted schematically in Figure \ref{FigChGlMp}. Each is composed of two product \SFS{}s $S\times\sph{1}$ and $S'\times\sph{1}$ glued together, where $S$ and $S'$ are copies of a torus with two open discs removed. One readily verifies that the conditions of Theorem \ref{GMrigid} hold, so the fundamental groups have isomorphic profinite completions. 
\begin{figure}[htp]
\centering
\includegraphics[scale=0.7]{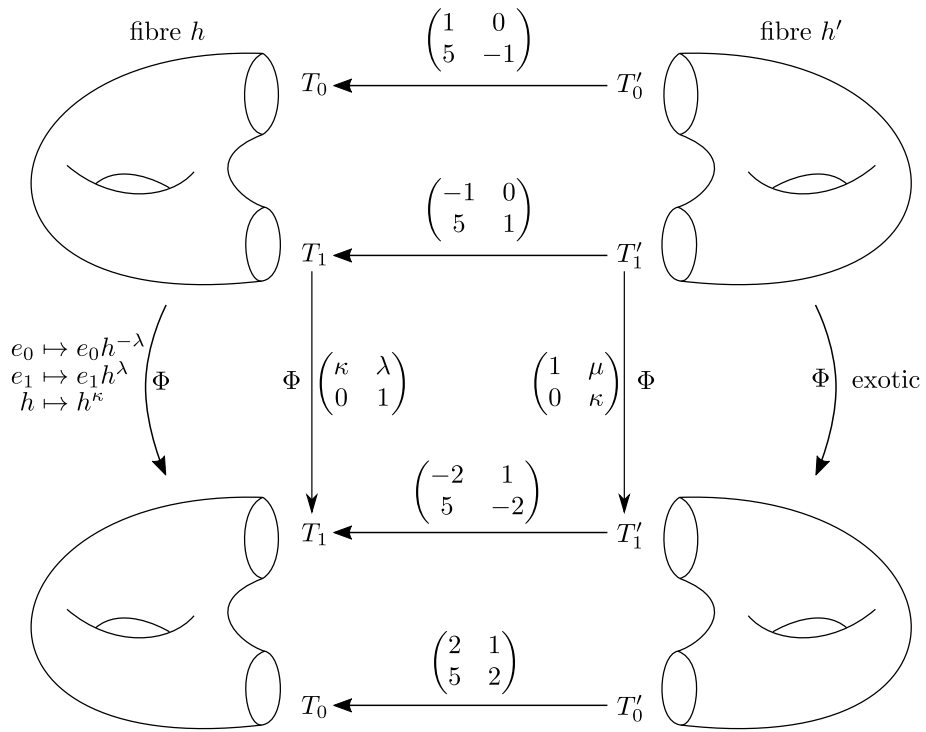}
\caption{The graph manifolds considered in Example \ref{ExChGlMp}. Here each Seifert fibred piece is product of a surface with \sph{1}. Each boundary subgroup $T_i^{(\prime)}$ is given an ordered basis $(h^{(\prime)}, e_i^{(\prime)})$.  Please note that for clarity we have swapped the roles of $T_0$ and $T_1$ in the lower diagram and omitted the maps induced by $\Phi$ on $T_0$ and $T'_0$. }
\label{FigChGlMp}
\end{figure}
As in the previous example, we will write down the isomorphism explicitly. The fundamental group of the first manifold has presentation
\begin{multline*}
G_1 = \big<e_1,u,v,h,e'_1, u',v',h', t\,\big|\, [h,e_1], [h,u], [h,v], [h',e'_1], [h', u'], [h', v']\\ 
e'_1=e_1, h'=h^{-1}e_1^5, (e'_0)^t= e_0^{-1}, (h')^t=he_0^5\big>
\end{multline*}
where $e_0= (e_1[u,v])^{-1}$ and similarly for $e'_0$. The second group has presentation
\begin{multline*}
G_2 = \big<e_1,u,v,h,e'_1, u',v',h', t\,\big|\, [h,e_1], [h,u], [h,v], [h',e'_1], [h', u'], [h', v']\\ 
e'_1=he_1^{-2}, h'=h^{-2}e_1^5, (e'_0)^t= he_0^{2}, (h')^t=h^2 e_0^5\big>
\end{multline*}
Let $\kappa\in\widehat\Z {}^{\!\times}$ be congruent to 2 modulo 5, so that $-2\kappa$ is congruent to 1 modulo 5. Take $\lambda,\mu\in \widehat\Z$ such that
\[\kappa=2+5\lambda, \quad 1=-2\kappa + 5 \mu \] Let $\phi$ be an exotic automorphism of $S'$ of type $\kappa$, such that
\[\phi(e'_1)=(e'_1)^\kappa, \quad \phi(e'_0)=[(e'_0)^\kappa]^g \]
for some $g$ in the subgroup of $\widehat{G}_2$ generated by $u'$, $v'$ and $e'_1$. Now define $\Phi\colon\widehat G_1\to\widehat G_2$ as follows:
\begin{eqnarray*}
h\mapsto h^\kappa, & e_1\mapsto e_1h^\lambda, & u\mapsto u,\, v\mapsto v,\,  t\mapsto g^{-1}t \\
h'\mapsto h', &e'_1\mapsto \phi(e'_1)(h')^\mu, & u'\mapsto \phi(u'),\, v'\mapsto \phi(v') 
\end{eqnarray*} 
The reader is left to verify that this map is well-defined. The only real issue is whether the maps on the edge tori match up correctly. As indicated in Figure \ref{FigChGlMp}, this amounts to checking a matrix equation 
\begin{eqnarray*}
\begin{pmatrix} -2 & 1 \\ 5 & -2 \end{pmatrix}
&=&\begin{pmatrix} \kappa & \lambda \\ 0 & 1 \end{pmatrix}
\begin{pmatrix} -1 & 0 \\ 5 & 1 \end{pmatrix}
\begin{pmatrix}  1 & \mu \\ 0 & \kappa \end{pmatrix}^{-1}\\
&=& \begin{pmatrix}
-\kappa+5\lambda & \mu-5\lambda\kappa^{-1}\mu+\lambda\kappa^{-1}\\
5 & \kappa^{-1}(1-5\mu)
\end{pmatrix}
\end{eqnarray*}
on the `$e_1$ edge' (and a similar one on the `$e_0$ edge'). These equations hold by the definitions of $\lambda$ and $\mu$.
\end{example}

\begin{example}[Fibred examples]\label{ExFibGM}
Consider the surface $S$ formed from a sphere by removing 10 small discs spaced equidistantly along the equator. This surface has an order 5 self-homeomorphism $\varphi$ given by a rotation. The surface bundle $M_q = S \rtimes_{\varphi^q} \sph{1}$ with monodromy $\varphi^q$ (for $q$ coprime to 5) is a Seifert fibre space whose base orbifold has genus 0, two boundary components and two exceptional fibres, each of order 5 and with Seifert invariants $(5, q)$ and $(5, -q)$ in some order. The surfaces $S$ describe parallel circles on each boundary torus; choose one such curve to give the second basis element of the fundamental group of the boundary. Take two such \SFS{}s $M_{q_1}$ and $M_{q_2}$. Glue the `$e_0$ boundary' of $M_{q_2}$ to the `$e_0$ boundary' of $M_{q_1}$ by a map \[\begin{pmatrix}-1 & 0 \\ 1 & 1\end{pmatrix} \] (interpreted as a map from the boundary of $M_{q_2}$ to the boundary of $M_{q_1}$) and glue the `$e_1$ boundary' of $M_{q_2}$ to the `$e_1$ boundary' of $M_{q_1}$ by a map \[\begin{pmatrix} -1 & 0 \\ -1 & 1\end{pmatrix} \]
These gluings give a graph manifold $L_{q_1, q_2}$ (see Figure \ref{FigFibGM}). The choice of the second column guarantees that the glued-up manifold is still fibred, since in each case the fibre surfaces, each running exactly five times over the boundary components in curves isotopic to $e_0$ or $e_1$, may be matched by this gluing homeomorphism. By construction the total slope at each vertex space $M_{q_i}$ is zero. We may now apply Theorem \ref{GMrigid} to conclude that the distinct fibred graph manifolds $L_{1,1}$ and $L_{2, -2}$ have isomorphic profinite completions of fundamental groups.
\end{example}
\begin{figure}[htp]
\centering
\includegraphics[scale=0.7]{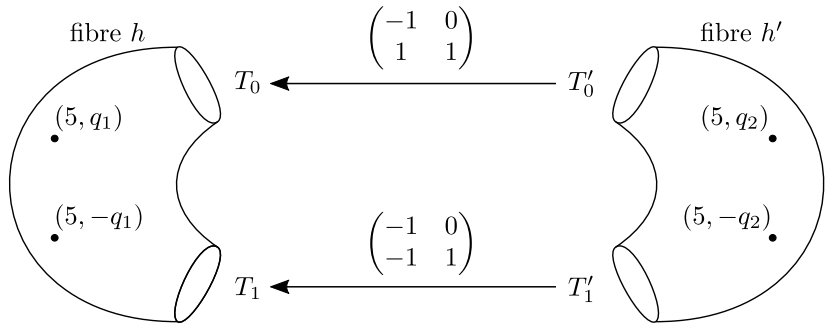}
\caption{Schematic picture of the graph manifold $L_{q_1,q_2}$ considered in Example \ref{ExFibGM}}
\label{FigFibGM}
\end{figure}
\begin{rmk}
The study of fibred manifolds is closely connected with the study of mapping class groups. This will be discussed in Section \ref{secMCG}.
\end{rmk}
\section{Commensurability of graph manifolds}\label{secComm}
The following proposition answers a question asked of me by Michel Boileau at a conference in Marseille. All other known pairs of 3-manifold groups with the same profinite completions are commensurable---both Seifert fibred examples and examples with Sol geometry. The graph manifolds given by Theorem \ref{GMrigid} also fit in this pattern.  
\begin{prop}\label{commens}
Every closed orientable graph manifold has a finite-sheeted cover with profinitely rigid fundamental group. Hence if two graph manifold groups have isomorphic profinite completions, then they are commensurable.
\end{prop}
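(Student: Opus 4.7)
We aim to construct, for any closed orientable graph manifold $M$, a finite cover $\tilde M$ whose profinite rigidity can be read off directly from Theorem \ref{GMrigid}. The commensurability statement then follows formally.

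Two easy observations dispose of non-central cases. If the JSJ graph of $M$ is not bipartite, Theorem \ref{GMrigid}(1) gives profinite rigidity of $\pi_1 M$ itself. If some vertex space of $M$ has non-zero total slope, condition (b) of Theorem \ref{GMrigid}(2) is violated for every potential partner $N$, so again $\pi_1 M$ is profinitely rigid. In both subcases take $\tilde M = M$. We may thus assume $M$ has bipartite JSJ graph and vanishing total slope at every vertex.

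In the main case, the plan is a two-stage finite cover. First, by applying Selberg's lemma to each 2-orbifold fundamental group $\ofg[O_v]$, produce a finite cover $\tilde M_0 \to M$ in which every Seifert piece is a trivial bundle $\Sigma_g\times\sph{1}$ with no cone points; the bipartite and balanced properties persist. Second, pass to a further finite cover $\tilde M\to\tilde M_0$ so that, in the labelled JSJ graph $\tilde X$, all $|\gamma(e)|$ are pairwise distinct across edges and all $g_v$ are pairwise distinct across vertices---thereby eliminating every non-trivial label-preserving graph automorphism. Granted such $\tilde M$, let $N$ be any compact orientable 3-manifold with $\widehat{\pi_1 N}\iso\widehat{\pi_1\tilde M}$; Theorem \ref{GMrigid}(2) supplies $\phi$ and $\kappa\in\widehat\Z{}^{\!\times}$ satisfying (a)--(c2). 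Condition (a) together with preservation of the base orbifold under Hempel pairs (Theorem \ref{Wilkeswbdy}), and hence of its genus, forces $\phi=\id$ by distinctness. Absence of cone points renders the Hempel pair condition vacuous, so $N_v\iso\tilde M_v$ for every vertex. With $\phi=\id$, conditions (c1)/(c2) reduce to $(\kappa-1)\delta(e)\equiv 0\pmod{\gamma(e)}$; since the gluing matrix has determinant $-1$ we have $\gcd(\delta(e),\gamma(e))=1$, so $\kappa\equiv 1\pmod{\gamma(e)}$ on every edge. Hence $N$ and $\tilde M$ have the same JSJ graph, the same vertex spaces, the same $\gamma(e)$, the same $\delta(e)\bmod\gamma(e)$, and both are balanced; these data specify a graph manifold up to homeomorphism, so $N\iso\tilde M$.

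The hard part is the second stage of the cover construction: producing a single finite cover $\tilde M\to\tilde M_0$ in which all $|\gamma(e)|$ and all $g_v$ become simultaneously pairwise distinct. The natural route is to compose with further local covers that alter one target label at a time while leaving the others under controlled scaling, but the simultaneous bookkeeping across the graph of spaces is delicate. Once the first statement is in hand, commensurability is formal: given $M_1,M_2$ with $\widehat{\pi_1 M_1}\iso\widehat{\pi_1 M_2}$, choose a profinitely rigid cover $\tilde M_1\to M_1$; the isomorphism carries $\overline{\pi_1\tilde M_1}$ to a finite-index subgroup of $\widehat{\pi_1 M_2}$, whose intersection with $\pi_1 M_2$ defines a cover $\tilde M_2\to M_2$ with $\widehat{\pi_1\tilde M_1}\iso\widehat{\pi_1\tilde M_2}$, and rigidity of $\tilde M_1$ then gives $\tilde M_1\iso\tilde M_2$, a common finite cover of $M_1$ and $M_2$.
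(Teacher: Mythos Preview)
There is a genuine gap in the rigidity argument for your cover $\tilde M$. The step ``conditions (c1)/(c2) reduce to $(\kappa-1)\delta(e)\equiv 0\pmod{\gamma(e)}$'' is incorrect. Those conditions compare the invariants of \emph{two} manifolds: they say $\delta_N(e)\equiv\kappa^{\pm1}\delta_{\tilde M}(e)\pmod{\gamma(e)}$, not $\delta_{\tilde M}(e)\equiv\kappa^{\pm1}\delta_{\tilde M}(e)$. You have no a~priori knowledge of $\delta_N(e)$, so there is nothing forcing $\kappa\equiv1$. Concretely, pick any integer $\kappa$ coprime to $\prod_e\gamma(e)$ and set $\delta_N(e)$ to be the residue of $\kappa^{\pm1}\delta_{\tilde M}(e)$ modulo $\gamma(e)$; since all total slopes of $\tilde M$ vanish and there are no cone points, one can adjust by integers to make all total slopes of $N$ vanish too, producing a genuine graph manifold $N\not\cong\tilde M$ satisfying (a)--(c2). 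Example~\ref{ExChGlMp} of the paper is exactly this phenomenon (two product pieces, no cone points, $\kappa\equiv2\pmod5$), and one can easily modify it to have pairwise distinct $\gamma(e)$ and distinct vertex genera without destroying the non-rigidity. Thus forcing $\phi=\id$ by distinctness does not suffice: the freedom in $\kappa$ survives.

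The paper's proof takes the opposite tack. Rather than making the $\gamma(e)$ distinct, it passes to a finite cover in which every $\gamma(e)=1$ (and every piece is $\Sigma_g\times\sph1$ with $g\geq2$, and all total slopes vanish). Once $\gamma(e)=1$ the congruence $\delta_N(e)\equiv\kappa^{\pm1}\delta_{\tilde M}(e)\pmod{1}$ is vacuous, so the $\kappa$-freedom disappears entirely, and such a ``right-angled'' manifold is determined by its JSJ graph together with the Betti numbers of the vertex pieces, both of which are profinite invariants. The construction of the cover with all $\gamma(e)=1$ is carried out by a graph-of-finite-groups argument (quotient each vertex group to the orbifold group obtained by coning off boundary circles to order-$\gamma(e)$ points, then use residual finiteness of the resulting graph of finite groups). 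Your Stage~2 therefore needs to be replaced: aim for $\gamma(e)=1$ everywhere, not for distinctness.
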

\begin{proof}
The second statement follows easily from the first. For the first, we will consider the following class of graph manifolds. We will say that a graph manifold $M$ with JSJ decomposition $(X, M_\bullet)$ is {\em right-angled} if the following conditions hold:
\begin{enumerate}[({RA}1)] 
\item the total slope at every vertex space of $M$ is zero,
\item for every edge space the intersection number of the fibres of the adjacent vertex spaces is 1, and
\item every vertex space is of the form $S\times\sph{1}$ for $S$ an orientable surface with genus at least 2 (that is, if all boundary components are filled in with discs, the result has genus at least 2).
\end{enumerate} 
Conditions (RA1) and (RA2) together imply that, after performing suitable Dehn twists, the gluing maps on every torus are simply $\begin{pmatrix} 0 & 1\\1&0\end{pmatrix}$. That is, the fibres of adjacent pieces `are at right angles to each other'. A right-angled graph manifold is thus determined completely by its underlying graph and the first Betti numbers of the vertex groups. This graph, and the first Betti numbers of the vertex spaces, are profinite invariants by Theorem \ref{decompfixed}. Therefore to show that right-angled graph manifolds are indeed profinitely rigid it only remains to show that the property of being right-angled is a profinite invariant.

Theorem \ref{GMrigid}.2(b) immediately shows that having all total slopes zero is a profinite invariant. Theorem \ref{GMrigid}.2(a) shows that property (RA2) is preserved by profinite completions. Finally (RA3) is a profinite invariant by Theorems \ref{Wilkeswbdy} and \ref{decompfixed}. Hence right-angled graph manifolds are indeed profinitely rigid. 

Now consider a closed orientable graph manifold $M$. If the total slope of some vertex is non-zero then we have rigidity by Theorem \ref{GMrigid} so we will ignore this case for the rest of the proof. Also notice that the vanishing of total slopes is essentially equivalent to the vanishing of Euler numbers of the Seifert fibre spaces obtained by filling each boundary torus of a Seifert fibred piece by a solid torus with meridian running along the fibre of the adjacent piece; so this condition is preserved by taking finite-sheeted covers of $M$. We will show that every graph manifold with zero total slope at each vertex has a finite-sheeted cover which is right-angled.  

First pass to a suitable index 1 or 2 cover to eliminate all minor pieces. Every base orbifold of a major Seifert fibre space has a finite cover which is a surface $S$ of genus at least two, hence every Seifert fibred piece has a finite cover of the form $S\times\sph{1}$. The JSJ decomposition is efficient in the profinite topology (Theorem A of \cite{WZ10}) so some finite-sheeted cover of the graph manifold induces such a cover on every Seifert-fibred piece (it may be necessary to pass to a deeper cover on each vertex space than the one specified---but this is still of the required form $S\times\sph{1}$). 

We must now aim to satisfy the condition on intersection numbers. Consider some edge group $T_e=\Z^2$. The fibres of the adjacent vertex groups are primitive elements generating an index $\gamma$ subgroup of $T_e$, so there is a quotient map $T_e\twoheadrightarrow \Z/\gamma\Z$ sending both fibres to zero. The fibres therefore lift to the corresponding degree $\gamma$ cover of the torus, and the intersection number of any choice of such lifts is 1. So if we can find a finite-sheeted cover of the graph manifold inducing this precise cover of each boundary torus, we are done.

For each vertex space $M_v$ we may, as previously noted, fill in each boundary torus $T_e$ by gluing in a solid torus whose meridian is the fibre of the adjacent piece. This gives a Seifert fibre space whose base orbifold is the base orbifold of $M_v$ with each boundary component collapsed and replaced by a cone point of order $\gamma(e)$, the intersection number of the two fibres. The fundamental group of this orbifold is residually finite, so there is a finite quotient into which all the isotropy groups of cone points inject. That is, we have a quotient $G_v\twoheadrightarrow Q_v$ such that the map on each boundary subgroup $T_e$ is precisely the map $T_e\twoheadrightarrow \Z/\gamma(e)\Z$ discussed above.

We may now piece all these quotient groups together into a quotient graph of groups 
\[(X,G_\bullet)\twoheadrightarrow (X, Q_\bullet) \]
where each $Q_v$ is as above and each edge group $Q_e$ is a copy of $\Z/\gamma(e)\Z$. This is a graph of finite groups, hence is residually finite (see for example \cite{Serre}, Section II.2.6, Proposition 11). So there is some finite quotient into which all the finite groups $Q_x$ inject. The corresponding finite-sheeted cover of the graph manifold is the required rigid cover.
\end{proof}
\section{Behaviour of knot complements}\label{secKnots}
In this section we will prove the following theorem. A {\em graph knot} is a knot in $\sph{3}$ whose exterior is a graph manifold.
\begin{theorem}\label{thmKnots}
Let $M_K$ be the exterior of a graph knot $K$. Let $N$ be another compact orientable 3-manifold and assume that $\widehat{\pi_1 M_K}\iso \widehat{\pi_1 N}$. Then $\pi_1 M_K\iso \pi_1 N$. In particular if $K$ is prime and $N$ is also a knot exterior then $N$ is homeomorphic to $M_K$.
\end{theorem}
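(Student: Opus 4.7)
The plan is to show that $N$ must be homeomorphic to $M_K$ (at least in the prime case), which gives both $\pi_1 M_K \iso \pi_1 N$ and, via the Gordon--Luecke theorem, the final statement about knot complements.

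First I would apply the bounded-boundary analogue of Theorem \ref{decompfixed}, together with Wilton--Zalesskii's profinite detection of the geometric decomposition of a 3-manifold, to deduce that $N$ is itself a graph manifold whose JSJ structure matches that of $M_K$ vertex-by-vertex under a graph isomorphism $\varphi$, with each pair $(M_x, N_{\varphi(x)})$ a Hempel pair in the sense of Definition \ref{defHempelpair}. Since $M_K \subset \sph{3}$ we have $H_2(M_K;\Q) = 0$, so the JSJ graph is a tree, hence bipartite, and Theorem \ref{GMrigid} yields a single scale factor $\kappa\in\widehat{\Z}^{\!\times}$ (inverted across the bipartite colouring) governing the isomorphism. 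Because $\partial M_K$ is the unique unglued boundary torus of the JSJ, $\Phi$ canonically matches it to the unique boundary of $N$.

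The heart of the argument is to constrain $\kappa$ using the peripheral meridian--longitude data, which is present in $M_K$ but is \emph{not} assumed to be preserved by $\Phi$. On the peripheral $\widehat{\Z}^2$ at the outer vertex $M_{v_0}$ the Hempel pair map takes the form of an upper-triangular matrix $\bigl(\begin{smallmatrix}\lambda & x\\0 & \mu\end{smallmatrix}\bigr)$ in Seifert coordinates $(h_{v_0}, e_{v_0})$, with $\kappa = \mu^{-1}\lambda$. On the other hand $\ell_K$ is nullhomologous in $M_K$ while $m_K$ generates $H_1(M_K) = \Z$, and the images under $\Phi$ must satisfy the analogous properties for $N$. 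For a prime graph knot, $M_{v_0}$ is a cable space whose Seifert invariants give explicit expressions for $m_K$ and $\ell_K$ in $(h_{v_0}, e_{v_0})$-coordinates. Combining these with the matrix form of the peripheral map and the $H_1$ conditions, I would extract integrality constraints on $\lambda$ and $\mu$ that, at each cone point and edge of the JSJ tree, force $\kappa \equiv \pm 1$ modulo the relevant order; Theorem \ref{GMrigid} then gives the homeomorphism $N \iso M_K$ and hence the group isomorphism.

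For composite $K = K_1\#\cdots\# K_n$, the exterior $M_K$ contains swallow--follow JSJ tori separating the prime summand exteriors, and the profinite JSJ matching from the first step produces an analogous decomposition of $N$; the problem reduces inductively to the prime case. For the `in particular' clause, when $N$ is itself a knot exterior the resulting homeomorphism $N \iso M_K$ together with the Gordon--Luecke theorem yields equality of the underlying knots up to mirror image.

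The main obstacle is the integrality step: a priori $\kappa$ ranges over the very large group $\widehat{\Z}^{\!\times}$, and only the peripheral knot-theoretic data---specifically that the longitude is $H_1$-trivial and the meridian generates $H_1$---seems sufficient to cut it down to $\pm 1$. Carrying this out requires careful bookkeeping of the cable-space Seifert invariants and of how the Dehn-twist ambiguity $x$ interacts with the meridional class at each level of the JSJ tree.
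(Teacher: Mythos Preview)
There is a genuine gap. Your whole strategy rests on reading off the map $\Phi$ on the peripheral torus $\widehat{\pi_1\partial M_K}$ in Seifert coordinates and then exploiting the meridian--longitude data there. But $\Phi$ is \emph{not} assumed to respect the peripheral structure, and in fact it need not carry $\widehat{\pi_1\partial M_K}$ to $\widehat{\pi_1\partial N}$ even up to conjugacy; this already fails for automorphisms of the discrete group of a cable space (which can swap the two boundary components). So there is no matrix $\bigl(\begin{smallmatrix}\lambda & x\\0 & \mu\end{smallmatrix}\bigr)$ to extract on the free boundary, and no way to feed the nullhomology of $\ell_K$ or the generation of $H_1$ by $m_K$ into an integrality constraint on $\kappa$. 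Relatedly, your appeal to Theorem \ref{GMrigid} is not licit: that theorem is for closed manifolds, and its condition (b) says all total slopes vanish, whereas for a graph knot exterior the leaf pieces are torus knot exteriors with \emph{non-zero} total slope (Lemma \ref{nonzeroslope}); so you cannot simply invoke a single bipartite scale factor $\kappa$.

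The paper's proof runs in the opposite direction: it ignores the free boundary entirely and instead uses the non-vanishing of the total slope at every leaf (Lemma \ref{nonzeroslope}). Away from the root all boundary tori of a JSJ piece are edge tori and hence \emph{are} preserved by $\Phi$, so Theorem \ref{Wilkeswbdy} applies and gives constants $\lambda_x,\mu_x$ at each non-root vertex; total slopes scale by $\lambda_x/\mu_x$, and since a non-zero rational times $\lambda_x/\mu_x$ must remain rational one gets $\lambda_x=\mu_x$ at leaves. This equality then propagates along the tree (using $\lambda_x=\mu_{x'}$ across edges) up to the root, where only the centre (the fibre subgroup) is needed, not the full peripheral torus. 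That forces all Hempel scale factors to be $1$, hence each $M_x\cong N_{\phi(x)}$, and the remaining gluing data is pinned down without ever touching $\partial M_K$.
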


While the theorems of \cite{Wilkes16} were stated for closed manifolds, for most of the paper the assumption that the boundary was empty was not relevant. The arguments prior to Section 10 of \cite{Wilkes16} were based on the existence and properties of a JSJ decomposition along tori. This structure, and its properties, holds just as well for manifolds with toroidal boundary. In particular for graph knot exteriors we find that an isomorphism of profinite fundamental groups still induces a isomorphism of JSJ graphs and isomorphisms of the profinite fundamental groups of each vertex space of the JSJ decomposition. It is not necessarily true that any such isomorphism preserves the fundamental group of the boundary component (even up to conjugacy). Indeed this is not even true for the discrete fundamental group. However we shall see that the only ambiguities in determining the manifold from its profinite fundamental group come from ambiguities in the discrete fundamental group. 

Before proving the theorem we will need to discuss in more detail the Seifert fibre spaces arising in the JSJ decomposition of a graph knot exterior. Statements made in this discussion will be used without explicit reference in the following proofs.

Since a loop in the JSJ graph would give rise to a non-trivial element of $H^1(M_K; \Z)$ vanishing on the boundary component of $M_K$, which is impossible by standard properties of knots, the JSJ graph is a rooted tree with root given by the single boundary component. It follows from Section 7 and Corollary 9.3 of \cite{EN85} that the only possible vertex spaces are those described in the following list. A paraphrase of these results would be that all graph knots are built up from torus knots by the operations of cabling and connected-summation. See also Proposition 3.2 of \cite{Budney05}. Note that there are additional possibilities when considering exteriors of graph {\em links}. We do not consider this issue here.

\subsubsection*{Torus knot exteriors}
The exterior $E_{p,q}$ of a $(p,q)$-torus knot, for $|p|, |q|\geq 2$. This is a Seifert fibre space with two exceptional fibres of orders $p$ and $q$. Since $p$ and $q$ are coprime there exist $\bar p$ and $\bar q$ such that $\bar p p + \bar q q=1$. Then a presentation for the fundamental group is
\[\big< a,b,h \,\big|\, h \text{ central,}\, a^ph^{\bar q}, b^qh^{\bar p}\big> \]
where $h$ is the homotopy class of a regular fibre and $ab$ is a meridian curve of the knot. We remark that replacing $\bar q$ and $\bar p$ by any integers coprime to $p$ and $q$ respectively does not change this group up to isomorphism.

The profinite completion of this group has centre $\widehat\Z$ generated by $h$, and the quotient by this centre is $\Z/p\amalg \Z/q$, where $\amalg$ denotes the free profinite product. By the techniques of \cite{Wilkes15} any Seifert fibre space group with the same profinite completion has the same base orbifold (a disc with two cone points of orders $p$ and $q$), and will therefore have the same discrete fundamental group by the above remark. It will also have only one boundary component. See also \cite[Section 4]{GZ11}.

Theorem \ref{Wilkeswbdy} implies that the Seifert fibre spaces with the same profinite fundamental group as $E_{p,q}$ {\em preserving peripheral structures} are precisely those with the Seifert invariants ${\bar q}$ and $\bar p$ replaced by $k{\bar q}$ and $k\bar p$ for any $k$ coprime to $pq$. Note that the requirement $\bar p p + \bar q q=1$ additionally shows that, for $k$ not congruent to 1 modulo $pq$, this Seifert fibre space is not a knot exterior.

If a torus knot exterior arises as a JSJ piece of a graph knot exterior then this piece must of course be a leaf of the JSJ tree. As the other possibilities in the list will show, the converse also holds: any leaf (except the root) is a torus knot exterior.

\subsubsection*{Products}
Pieces of the form $S\times\sph{1}$, where $S$ is a sphere with at least $k+1\geq 3$ open discs removed. The only Seifert fibre spaces with the same profinite fundamental group as $S\times\sph{1}$ also have the same discrete fundamental group---see Theorem 5.5 of \cite{Wilkes15}---and are therefore also of the form $F\times \sph{1}$ for some surface $F$ with $\pi_1 F\iso \pi_1 S$. Note that if $F$ is orientable then either $F=S$ or $F$ has at most two fewer boundary components than $S$. 

The presence of such a piece in the JSJ decomposition of a graph knot exterior represents the procedure of taking the connected sum of several graph knots. Note that under this operation, the {\em meridian} of each summand becomes a {\em fibre} of the product piece. Such a piece is either of valence $k+1$ in the JSJ graph, or $k$ if it happens to be the root piece.

\subsubsection*{Cable spaces}
A cable space $C_{s,t}$ of type $(s,t)$, for $|s|\geq 2$, consists of the space formed from a fibred solid torus $T$ with Seifert invariants $(s,t)$ by removing a neighbourhood of a regular fibred removed. Equivalently this is the orientable Seifert fibre space with base orbifold an annulus with a single cone point of Seifert invariants $(s,t)$ where $s$ is coprime to $t$. A presentation for the fundamental group is
\[\pi_1 C_{s,t}=\big< c,e,j\,\big|\, j \text{ central, } c^sj^t\big>\]
where the regular fibre is $j$ and the boundary components are given by the conjugacy classes of the subgroups $\gp{j,e}$ and $\gp{j,(ce)^{-1}}$. Note also that, if the boundary component of $T$ is represented in $\pi_1C_{s,t}$ by $\gp{j,e}$ then the boundary of a meridian disc of $T$ is given by $m=j^{t}e^{-s}$.

Notice that these groups $\pi_1 C_{s,t}$ as $t$ varies are abstractly isomorphic to each other, and that such isomorphisms can be chosen to fix one (but not both) boundary components. Theorem \ref{Wilkeswbdy} shows that the profinite fundamental groups of $C_{s,t}$ as $t$ varies are isomorphic while preserving all of the given peripheral structure. 

Furthermore (the proof of) Theorem 5.5 of \cite{Wilkes15} shows that the only orientable Seifert fibre spaces with the same profinite fundamental group (ignoring the peripheral structure) as a cable space have orientable base orbifolds with fundamental group $\Z\ast \Z/s$. There is only one such orbifold, so the Seifert fibre space in question is again a cable space (with the same invariant $s$).

The presence of a cable space piece in the JSJ decomposition of a graph knot exterior represents the cabling operation on knots. The corresponding vertex of the JSJ graph has valence two, or one if it happens to be the root. Note that in this case the meridian curve of the initial knot becomes the element $m$ given above (i.e.\ the meridian curve of $T$). A longitude of the initial knot will therefore be given by $l=j^{\bar s}e^{\bar t}$ for some integers $\bar s, \bar t$ such that $s\bar s+t\bar t=1$. The ambiguity in the choice of these integers reflects the ambiguity in the choice of longitude. 

\begin{lem}\label{nonzeroslope}
Let $K$ be a graph knot with exterior $M_K$. Let $(X,M_\bullet)$ be the JSJ decomposition of $M_K$, where $X$ is viewed as a rooted tree. Let $x$ be a leaf of $X$ that is not the root. Then the total slope at $x$ is non-zero.
\end{lem}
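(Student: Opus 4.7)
The plan begins by observing, as noted explicitly above the lemma, that any non-root leaf of $X$ must be a torus knot exterior: of the three possible JSJ-piece types, products have valence at least three and cable spaces have valence two in any non-root position. So write $M_x = E_{p,q}$, let $e$ be the unique edge at $x$, and use the Bezout relation $\bar p p + \bar q q = 1$ to reduce the formula for the total slope to
\[\tau(x) = \delta(e)/\gamma(e) - 1/(pq).\]

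To compute $\delta(e)/\gamma(e)$, I would identify the fibre (and companion basis element) of the adjacent JSJ piece on $\partial E_{p,q}$ in the $(h,e_0)$ basis. For this I would use the identities $m_{T(p,q)}=e_0^{-1}$ and (Seifert longitude) $l_{T(p,q)}=h\,e_0^{-pq}$, which follow at once from the abelianisation of $\pi_1 E_{p,q}$.

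I then split into two cases. If the adjacent piece is a cable space, then by the cabling description above it realises an $(s,T)$-cable with $|s|\geq 2$ and $\gcd(s,T)=1$; in particular $T\neq 0$. The cable-space fibre on $\partial E_{p,q}$ is the cable curve $sl+Tm = sh-(spq+T)e_0$, and a short matrix inversion (pinning down the sign by $\det M = -1$) gives $\delta/\gamma = s/(spq+T)$, so that
\[\tau(x) = \frac{s}{spq+T}-\frac{1}{pq} = \frac{-T}{pq(spq+T)}\neq 0.\]
If instead the adjacent piece is a product $S\times\sph{1}$, then the discussion above tells us that its fibre at $\partial E_{p,q}$ agrees with the meridian $e_0^{-1}$ of the summand $T(p,q)$; this element has zero $h$-coefficient, so $\delta/\gamma = 0$ and $\tau(x) = -1/(pq) \neq 0$.

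The main technical obstacle is choosing the gluing-matrix conventions correctly---specifically, which boundary basis is the ``input'' for the matrix $M$ (so that $\gamma$ genuinely reads off the intersection of fibres), and fixing orientations so that $\det M = -1$. Once these are pinned down, both cases come down to one-line Bezout arithmetic.
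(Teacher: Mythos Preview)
Your argument is correct and follows essentially the same two-case strategy as the paper: identify the leaf as a torus-knot exterior, then compute the gluing matrix to the adjacent piece (product or cable space) and check that $\tau(x)\neq 0$. In the product case your computation coincides with the paper's verbatim, giving $\tau(x)=-1/(pq)$.

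The only genuine difference is bookkeeping in the cable case. The paper works on the cable-space side, using its $(j,e)$ basis and the relations $ab=j^{t}e^{-s}$, $h=j^{\bar s}e^{\bar t}$ (so that $h$ itself plays the role of ``a longitude''); this gives $\gamma=\bar t$, $\delta=-s$, and the paper finishes by contradiction (if $\tau(x)=0$ then $s\mid\bar t$, impossible since $s\bar s+t\bar t=1$ forces $\gcd(s,\bar t)=1$). You instead work on the torus-knot side in the Seifert-framed basis $(l,m)$, writing the cable fibre as $sl+Tm$, and after inverting obtain $\delta/\gamma=s/(spq+T)$ and the closed formula $\tau(x)=-T/\bigl(pq(spq+T)\bigr)$. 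The two computations match under the dictionary $spq+T=\bar t$ (equivalently $T=\bar t-spq$), and your primitivity argument $\gcd(s,T)=1\Rightarrow T\neq 0$ is exactly the paper's divisibility contradiction in disguise. Your route has the minor advantage of producing an explicit nonzero value for $\tau(x)$ rather than a contradiction; the paper's route avoids having to invert the matrix. Either way the content is the same.
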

\begin{proof}
Since $x$ is a leaf, the corresponding manifold $M_x$ is the exterior of a $(p,q)$ torus knot for some coprime integers $p$ and $q$. As above choose a presentation
\[\pi_1 M_x=\big< a,b,h \,\big|\, h \text{ central,}\, a^ph^{\bar q}, b^qh^{\bar p}\big> \]
where $\bar p p + \bar q q=1$ and the meridian of the torus knot is $ab$. Let $y$ be the unique vertex of $X$ adjacent to $x$. There are two cases to consider: whether $M_y$ is a cable space or a product. 

Suppose first that $M_y$ is a product. Then by the discussions above the meridian $ab$ of  the torus knot is isotopic to the fibre $j$ of the product. Thus the gluing matrix along the edge joining $x$ and $y$, oriented from $x$ to $y$, has the form 
\[ \begin{pmatrix}
\alpha & \beta\\ \gamma & \delta
\end{pmatrix} 
= \begin{pmatrix}
\alpha & 1 \\ 1 & 0
\end{pmatrix}\]
Hence the total slope at $x$ is 
\[ \tau(x) = 0-\frac{\bar q}{p} - \frac{\bar p}{q} = -\frac{1}{pq} \neq 0\]

Next we consider the case when $M_y$ is an $(s,t)$-cable space whose fundamental group has presentation
\[\pi_1 C_{s,t}=\big< c,e,j\,\big|\, j \text{ central, } c^sj^t\big>\]
where the regular fibre is $j$ and the boundary components are given by the conjugacy classes of the subgroups $\gp{j,e}$ and $\gp{j,(ce)^{-1}}$. Without loss of generality let the boundary component glued to $M_x$ be $\gp{j,e}$. As discussed above the meridian $ab$ of the torus knot is given by $ab=j^{t}e^{-s}$ and the fibre $h$---which is a longitude of the torus knot---is given by $h=j^{\bar s}e^{\bar t}$ for some integers $\bar s, \bar t$ such that $s\bar s+t\bar t=1$. 

 Thus the gluing matrix along the edge joining $x$ and $y$, again oriented from $x$ to $y$, has the form 
\[ \begin{pmatrix}
\alpha & \beta\\ \gamma & \delta
\end{pmatrix} 
= \begin{pmatrix}
\bar s & t \\ \bar t & -s
\end{pmatrix}\]
And the vanishing of the total slope would imply
\[0 = pq{\bar t}\tau(x)=pq(-s) - \bar t( \bar q q + \bar p p ) = -pqs  - \bar t\]
and hence $s$ would divide $\bar t$, so $s=1$ giving a contradiction. So the total slope is non-zero as required. 
\end{proof}

\begin{proof}[Proof of Theorem \ref{thmKnots}]
Let $(X,M_\bullet)$ and $(Y,N_\bullet)$ be the JSJ decompositions of $M_K$ and $N$ respectively. Let $G=\pi_1 M_K$, $G_\bullet=\pi_1 M_\bullet$, $H=\pi_1 N$ and $H_\bullet =\pi_1 N_\bullet$. Let $\Phi\colon \widehat{G}\to\widehat{H}$ be an isomorphism. By Theorem \ref{decompfixed} and Theorem 7.1 of \cite{Wilkes16} (whose proofs do not rely on the manifolds being closed) we find that $N$ is a graph manifold and, possibly after post-composing $\Phi$ with some automorphism of $\widehat{H}$, there is a graph isomorphism $\phi\colon X \to Y$ such that $\Phi$ restricts to an isomorphism $\widehat{G}_x\iso \widehat{H}_{\phi(x)}$ for each $x\in X$. By the discussions above, this implies that $G_x$ and $H_{\phi(x)}$ are abstractly isomorphic.

Now $X$ is a rooted tree with root $r$ distinguished by the single boundary component of $M_K$. For standard cohomological reasons (for example, Corollary 4.2 of \cite{BF15}) the manifold $N$ cannot be closed. We claim that $N$ has exactly one boundary component, located in the piece $N_{\phi(r)}$. We do not claim---indeed it may not be true even for isomorphisms of discrete fundamental groups---that this boundary component $\bdy N$ satisfies
\[\widehat{\bdy N}=\Phi(\widehat{\pi_1(\bdy M_K)}), \]
even up to conjugacy. However its position in the JSJ decomposition is fixed because of the following argument. Here a `free boundary component' of a JSJ piece of a 3-manifold will mean a boundary torus not glued to any JSJ piece---that is, those boundary components which survive in the boundary of the ambient manifold.

Every Seifert fibre space with fundamental group isomorphic to that of a torus knot exterior has exactly one boundary component---and every such piece of $N$ has valence 1 in $Y$ because of the isomorphism $\Phi$, hence has no free boundary components. Similarly every Seifert fibre space with fundamental group isomorphic to that of a cable space has two boundary components, and all such pieces $N_y$ have valence 2 in $Y$ unless $y=\phi(r)$, when there is one free boundary component $\bdy N$. Comparing $X$ and $Y$, every piece $N_y$ with fundamental group isomorphic to $F_k\times\Z$ has valence $k+1$ in $Y$ unless $y=\phi(r)$, when it has valence $r$. Now any orientable Seifert fibre space with fundamental group $F_k\times\Z$ has either $k+1$ boundary components or has strictly fewer than $k$---hence all these pieces have no free boundary components except if $y=\phi(r)$ when it has exactly one. So we see that $N$ may have at most one boundary component, and it is located in the piece $N_{\phi(r)}$.

Now consider the isomorphisms $\Phi_x\colon \widehat{G}_x\to \widehat{H}_{\phi(x)}$. For $x\neq r$, the fact that $\Phi$ preserves the JSJ decomposition implies that the peripheral structure is preserved by $\Phi_x$. So by Theorem \ref{Wilkeswbdy} the isomorphism $\Phi_x$ determines constants $\lambda_x,\mu_x\in\widehat\Z {}^{\!\times}$ such that $\lambda_x$ gives the map from the fibre subgroup of $M_x$ to the fibre subgroup of $N_{\phi(x)}$ and the map on base orbifolds is an exotic isomorphism of type $\mu_x$.  Analysing the gluing maps in $M$ and $N$ as in the proof of Theorem \ref{GMrigid} we find that if $x'$ is adjacent to $x$ (and neither is the root) we have $\lambda_x=\mu_{x'}$ and $\mu_x = \lambda_{x'}$, perhaps up to choosing appropriate orientations on fibre subgroups to eliminate minus signs.  Furthermore the total slopes of $M_x$ and $N_{\phi(x)}$ are related by multiplication by $\lambda_x/\mu_x$. (Strictly speaking we have not defined multiplication of an element of $\Q$ by an element of $\widehat\Z$; we really mean that after clearing denominators by an integer $n\in\Z$ we have $n\tau(\phi(x))= \lambda_x\mu_x^{-1}\tau(x)$. Alternatively one can consider the ring obtained from $\widehat{\Z}$ by inverting all the elements of \Z.)

However the total slope $\tau(x)$ of any piece $M_x$ which is a torus knot exterior is non-zero by Lemma \ref{nonzeroslope}. So the total slope of $N_{\phi(x)}$ is a rational number equal to $\lambda_x/\mu_x$ times a non-zero rational number. This implies that $\lambda_x = \mu_x$ (up to changes in orientations) using Lemma 2.2 of \cite{Wilkes16}. Every connected component of $X\smallsetminus\{r\}$ contains a leaf of $X$, hence has a vertex space which is a torus knot exterior. It follows that $\lambda_\bullet$ and $\mu_\bullet$ are constant on connected components of $X\smallsetminus\{r\}$.

Now on the root pieces of $M$ and $N$, the fibre subgroup is the unique maximal central subgroup of the relevant profinite fundamental group by Theorem 5.5 of \cite{Wilkes15} and so is preserved by $\Phi_r$. Let the map on fibre subgroups be multiplication by $\lambda_r$ (as usual, identifying this fibre subgroup with $\widehat\Z$ via a generator in the discrete fundamental group). Again, examining the gluing maps along the tori gluing $r$ to other vertices and using the same analysis as in Theorem \ref{GMrigid} we find that if $x$ is adjacent to $r$ then $\pm\mu_x=\lambda_r$. Therefore we find that $\lambda_\bullet$ and $\mu_\bullet$ are constants over all of $X$ (up to reversing all the orientations on components of $X\smallsetminus \{r\}$ to fix the signs). 

By Theorem \ref{Wilkeswbdy}, the fact that $\lambda_x/\mu_x=1$ (together with the fact that away from the root all boundary tori of JSJ pieces are edge groups in the JSJ decomposition, hence are preserved by $\Phi$) implies that $M_{x}$ and $N_{\phi(x)}$ are homeomorphic for $x\neq r$. 

Now, the fundamental group of the graph manifold $N$ is determined up to isomorphism by the following data: the JSJ graph; the isomorphism types of group pairs given by each vertex group and its adjacent edge groups; the intersection numbers $\gamma(e)$ of adjacent fibres, for each edge $e$ of the JSJ graph; the invariants $\delta(e)$ modulo $\gamma(e)$; and the total slope of each vertex space that is not the root. These allow us to reconstruct the total group uniquely---as explained in the preliminaries, the indeterminacy of $\delta(e)$ modulo $\gamma(e)$ may be resolved using Dehn twists given that the total slope is fixed. For the root one may use the free boundary component for Dehn twists, hence no sort of total slope condition is required.

Now the analysis in Theorem 10.1 of \cite{Wilkes16} shows that the fact that $\lambda_x/\mu_x$ is always equal to 1 fixes all of these data for $N$ to be equal to those for $M$. Hence the manifolds have the same fundamental group.

In the case when $K$ is prime (that is, the root piece is a cable space) and $N$ is also a knot exterior, then the fundamental group determines the homeomorphism type and final statement of the theorem also follows. See Section 7 and Corollary 9.3 of \cite{EN85} for the proof of the for graph knots; more generally is \cite[Corollary 2.1]{GL89}.
\end{proof}
\begin{rmk}
We comment that the properties deriving from the fact that $M_K$ was a knot exterior were crucial to the rigidity in this theorem.  In particular the fact that every leaf of the JSJ tree had non-zero total slope provides strong rigidity to all complements of the root piece, leaving little flexibility in what remained. If one had some complement of the root piece which was not profinitely rigid (relative it the boundary component joining it to the root) one can easily extend this to further non-rigid examples. The free boundary component in the root means that results such as Theorem \ref{Wilkeswbdy} which forces all the boundary components of the root to behave in roughly the same way simply do not apply. Possibly one could impose extra constraints on the boundary to extend the analysis of Theorem \ref{GMrigid}. However a large part of the interest of Theorem \ref{thmKnots} is that no such boundary condition is needed.
\end{rmk}

\section{Relation to mapping class groups}\label{secMCG}
In this section, we will only discuss closed orientable manifolds, to avoid worries on the boundary. We will view a fibred 3-manifold $(M, \zeta)$ as a 3-manifold $M$ equipped with a choice of homomorphism $\zeta\colon\pi_1 M \twoheadrightarrow \Z$ with finitely generated kernel $\pi_1 S$, where $S$ is a closed orientable surface. By Stallings' theorem on fibred 3-manifolds \cite[Theorem 2]{stallings62}, this is equivalent to the topological definition. For such a fibred manifold, $\pi_1 M$ has many expressions as a semidirect product $\pi_1 S\rtimes_\phi \Z$ each given by a section of $\zeta$. The different maps $\phi$ differ by composition with an inner automorphism of $\pi_1 S$, hence give a well-defined element of $\Out(\pi_1 S)$. If we have two automorphisms $\phi_1$ and $\phi_2$ of $\pi_1 S$, these are conjugate in ${\rm Aut}(\pi_1 S)$ by some automorphism $\psi$ if and only if there is an isomorphism of semidirect products
\[ (\psi, \id)\colon \pi_1 S \rtimes_{\phi_1} \Z \to \pi_1 S \rtimes_{\phi_2} \Z  \]    
Allowing for a change in section, we find that $\phi_1$ and $\phi_2$ are conjugate in $\Out(\pi_1 S)$ if and only if there is a commuting diagram
\[\begin{tikzcd}
\pi_1 M_1\ar{d}{\Psi}\ar[two heads]{r}{\zeta_1} & \Z \ar{d}{\id}\\
\pi_1 M_2\ar[two heads]{r}{\zeta_2} & \Z
\end{tikzcd}\]
where $(M_1,\zeta_1)$ and $(M_2, \zeta_2)$ are the fibred manifolds corresponding to $\phi_1$ and $\phi_2$.

All the above equivalences still hold when one replaces all manifold groups with their profinite completions and ${\rm Aut}(\pi_1 S)$ with ${\rm Aut}(\widehat{\pi_1 S})$. There is a canonical injection 
\[{\rm Aut}(\pi_1 S) \hookrightarrow {\rm Aut}(\widehat{\pi_1 S}) \]
and we will abuse notation by identifying an automorphism of $\phi_1 S$ with the induced automorphism of the profinite completion. As was noted by Boileau and Friedl \cite[Corollary 3.6]{BF15s}, Theorem 5.2 of \cite{Wilkes15} implies that the canonical map
\[\Out(\pi_1 S) \to \Out(\widehat{\pi_1 S}) \]
is also an injection.

Thus related to the question of whether two fibred manifolds can have isomorphic profinite fundamental groups we have a question concerning automorphisms of surface groups.
\begin{question}\label{QnPrMCG}
Do there exist automorphisms $\phi_1$, $\phi_2$ of a surface group $\pi_1 S$ which are conjugate in $\Out(\widehat{\pi_1 S})$ but not in $\Out(\pi_1 S)$?
\end{question}

When $S$ is a once-punctured torus, this question has been proven to have a negative answer by Bridson, Reid and Wilton \cite[Theorem A]{BRW16}. 

While a positive answer to Question \ref{QnPrMCG} would give examples of fibred manifold groups with isomorphic profinite completions, the converse does not always hold; one could conceivably have profinite isomorphisms of the manifold groups which do not in any sense preserve the fibrations. For instance Example \ref{ExFibGM} above does not give a positive solution to Question \ref{QnPrMCG}; as we will see below, no other graph manifold does either. Let us make this precise. 
\begin{defn}
Let $(M, \zeta_M)$ and $(N, \zeta_N)$ be fibred graph manifolds. Suppose \[\Psi\colon \widehat{\pi_1 M}\to \widehat{\pi_1 N}\] is an isomorphism. We say that $\Psi$ {\em weakly preserves the fibration} if there is a commuting diagram 
\[\begin{tikzcd}
\widehat{\pi_1 M}\ar{d}{\Psi}\ar[two heads]{r}{\hat\zeta_M} & \widehat\Z \ar{d}{\cdot\kappa}\\
\widehat{\pi_1 N)}\ar[two heads]{r}{\hat\zeta_N} & \widehat\Z
\end{tikzcd}\]
for some $\kappa\in\widehat\Z {}^{\!\times}$. We say that $\Psi$ {\em strongly preserves the fibration} if there exists such a diagram with, additionally, $\kappa= +1$.    
\end{defn}
Relating this definition to the discussion above, we see that strong fibre preservation yields conjugacy in $\Out(\widehat{\pi_1 S})$. Weak fibre preservation says that $\phi_1$ is conjugate to $\phi_2^\kappa$ in $\Out(\widehat{\pi_1 S})$.

Finite order automorphisms give rise to Seifert fibre spaces of geometry $\HR$. Hempel's original paper giving examples of \SFS{}s which are not profinitely rigid do not give an explicit isomorphism of profinite groups, so do not say anything about mapping classes. The isomorphisms constructed in \cite{Wilkes15} only preserve the fibre weakly. For a suitable choices of sections, the mapping classes involved are $\phi^k$ and $\phi$ for some $k$ which is coprime to $n$ equal to the order of $\phi$. The weak fibre preservation then says that $\phi^k$ is conjugate to $\phi^\kappa$ in $\Out(\pi_1 S)$, where $\kappa$ is the factor by which we stretch the homotopy class $h$ of a regular (Seifert) fibre. This is not exactly surprising, since in fact these automorphisms are equal. Indeed this gives yet another way to see that the corresponding \SFS{} groups must have isomorphic profinite completions. 

We can however say more. The mapping classes $\phi^k$ and $\phi$ are genuinely conjugate in $\Out(\widehat{\pi_1 S})$---that is, there exists an isomorphism which {\em strongly} preserves the fibration. The isomorphisms in \cite{Wilkes15} do not do this, but armed with the exotic automorphisms of surface groups from Proposition \ref{orbexauto} we can build new isomorphisms. The case with non-empty boundary was covered by Theorem 10.7 of \cite{Wilkes16} as the focus was then on constructing graph manifolds. We now deal with the case of closed \SFS{}s. 
\begin{theorem}
Let $S$ be a closed hyperbolic surface and let $\varphi$ be a periodic self-homeomorphism of $S$. Let $k$ be coprime to the order of $\varphi$. Let $(M,\zeta)$ be the surface bundle with fibre $S$ and monodromy $\varphi$ and let $(M',\zeta')$ be the surface bundle with fibre $S$ and monodromy $\varphi^k$. Then there is an isomorphism \[ \Psi\colon \widehat{\pi_1 M}\to \widehat{\pi_1 M'}\]
which strongly preserves the fibration.
\end{theorem}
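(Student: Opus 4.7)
The plan is to realise both $M$ and $M'$ as Seifert fibre spaces with common closed base orbifold $O = S/\gp{\varphi}$ and vanishing Euler number, and then to build $\Psi$ as the lift of an exotic automorphism $\phi$ of $\widehat{\ofg[O]}$ of type $\mu\in\widehat\Z{}^{\!\times}$, acting as the identity ($\lambda = 1$) on the regular fibre subgroup $\widehat\Z = \gp{h}$, in the manner of Theorem~\ref{Wilkeswbdy}. For the resulting scale factor $\kappa = \mu^{-1}$ to effect the change of monodromy from $\varphi$ to $\varphi^k$, the element $\mu$ must have a prescribed residue modulo $n = \mathrm{ord}(\varphi)$; such a $\mu\in\widehat\Z{}^{\!\times}$ is produced one prime at a time, taking $\mu\equiv 1$ at primes not dividing $n$. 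The condition $\lambda = 1$ is exactly what makes the map on fibres $\Psi(h) = h'$ compatible with $\hat\zeta$ and $\hat\zeta'$; since $h = t^n$ for a section $t$ of $\zeta$, strong fibration preservation then reduces to arranging $\Psi(t)$ to lie in the $\hat\zeta'$-preimage of $1$, which can be secured by a final correction in the fibre.

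The first substantive step is producing the exotic automorphism on the \emph{closed} orbifold $O$, since Proposition~\ref{orbexauto} only supplies such maps when the orbifold has boundary. To handle this I would remove a small open disc from $O$ (away from the cone points) to obtain a bounded orbifold $O^-$, apply Proposition~\ref{orbexauto} to obtain an exotic automorphism $\phi^-$ of $\widehat{\ofg[O^-]}$ of type $\mu$ with $\phi^-(e_0)$ conjugate to $e_0^\mu$ (where $e_0$ is the new boundary curve), and then descend: since $\mu$ is a unit of $\widehat\Z$, the closed normal subgroup of $\widehat{\ofg[O^-]}$ generated by $e_0^\mu$ coincides with the one generated by $e_0$, so $\phi^-$ passes to an exotic automorphism $\phi$ of $\widehat{\ofg[O]}$ of the same type. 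In parallel, removing a fibred neighbourhood of a regular fibre from $M$ and $M'$ produces bounded Seifert fibred manifolds $M^-, (M')^-$, and the technology of Theorem~10.7 of~\cite{Wilkes16} provides an isomorphism $\Psi^-\colon\widehat{\pi_1 M^-}\to\widehat{\pi_1 (M')^-}$ lifting $\phi^-$ with $\lambda = 1$.

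The main remaining step, and the expected main obstacle, is verifying that $\Psi^-$ descends across the solid tori glued back in to close up $M$ and $M'$. The closures are formed by killing meridian curves of the form $m = e_0 h^{-b}$ and $m' = e'_0 (h')^{-b'}$, and one must check that $\Psi^-$ sends the closed normal subgroup generated by $m$ into that generated by $m'$. Writing $\Psi^-(m) = (e'_0)^\mu (h')^{c-b}$ up to conjugacy, with $c\in\widehat\Z$ determined by the choice of lift, the required condition reduces to a single equation of the form $c - b + b'\mu = 0$ in $\widehat\Z$. The freedom to adjust $\Psi^-$ by central $h'$-corrections on the image of $e_0$ furnishes one free parameter to fix $c$, while compatibility of the resulting relation with the vanishing-Euler-number identities $b = -\sum q_i/p_i$, $b' = -\sum q'_i/p_i$ and the congruences between the Seifert invariants of $M$ and $M'$ is precisely the bookkeeping underpinning the scale-factor calculation of Theorem~\ref{Wilkeswbdy}. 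Once $\Psi$ is assembled, strong fibration preservation is immediate: $\Psi(h) = h'$ gives $\hat\zeta'\Psi(h) = \hat\zeta(h)$, and a final central adjustment of $\Psi(t)$ yields $\hat\zeta'\Psi(t) = 1 = \hat\zeta(t)$, so that $\hat\zeta'\circ\Psi = \hat\zeta$ on a generating set and hence on all of $\widehat{\pi_1 M}$.
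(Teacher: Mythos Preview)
Your overall strategy is the paper's strategy: construct $\Psi$ as a lift of an exotic automorphism of type $\kappa$ (with $\kappa\equiv k$ modulo $n$) acting as the identity on the fibre subgroup ($\lambda=1$). The paper, however, dispenses with the Dehn-filling detour. It simply writes down the closed-manifold presentations, invokes Proposition~\ref{orbexauto} on the bounded orbifold obtained by puncturing $O$ once to get an automorphism $\psi$ of the free profinite group on $\{a_i,u_j,v_j\}$ sending $a_1\cdots a_r[u_1,v_1]\cdots[u_g,v_g]$ to its $\kappa$-th power, and then defines $\Psi$ directly on generators by $h\mapsto h$, $a_i\mapsto\psi(a_i)h^{-\rho_i}$, $u_j\mapsto\psi(u_j)$, $v_j\mapsto\psi(v_j)$, where $q_i=\kappa q'_i+\rho_i p_i$. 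Your ``meridian condition'' $c-b+b'\mu=0$ is exactly the check that the last relation $a_1\cdots[u_g,v_g]=h^{b}$ is respected, and it holds automatically from $b=-\sum q_i/p_i$, $b'=-\sum q'_i/p_i$ and the definition of the $\rho_i$; no free parameter needs to be spent here.

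There is one genuine wrinkle in your final paragraph. The claim $h=t^n$ for a section $t$ of $\zeta$ is false in general: $\zeta(h)=\prod_j p_j$ while $n=\mathrm{lcm}(p_j)$, and these agree only when the $p_j$ are pairwise coprime. More importantly, you cannot ``adjust $\Psi(t)$ by a central correction'' independently of the rest of the map: $t$ is not a free generator of the Seifert presentation, so $\Psi(t)$ is already determined once $\Psi$ is specified on $a_i,u_j,v_j,h$. The correct (and cleaner) verification of strong preservation is the one the paper leaves to the reader: check $\hat\zeta'\circ\Psi=\hat\zeta$ on the generators $h,a_i,u_j,v_j$. For $h$ this is immediate. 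For $a_i$ one uses that $\psi(a_i)$ is conjugate to $a_i^\kappa$, so
\[
\hat\zeta'(\Psi(a_i))=\kappa\,\hat\zeta'(a_i)-\rho_i\textstyle\prod_j p_j
=-(\kappa q'_i+\rho_i p_i)\textstyle\prod_{j\neq i}p_j
=-q_i\textstyle\prod_{j\neq i}p_j=\hat\zeta(a_i).
\]
For $u_j,v_j$ one needs $\hat\zeta'(\psi(u_j))=\hat\zeta'(\psi(v_j))=0$; this follows because the explicit exotic automorphisms produced in the proof of Proposition~10.5 of \cite{Wilkes16} send each $u_j,v_j$ to a conjugate of a power of itself (the paper uses this without comment). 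With that, no ``final correction'' is needed at all.
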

\begin{proof}
We may choose presentations for these two \SFS{}s in the standard form. Note that since $M$ is fibred over the circle, the base orbifold is orientable and the geometry is \HR. So let 
\begin{multline*}
\pi_1 M = \big< a_1,\ldots, a_r, u_1, v_1, \ldots, u_g, v_g, h\,\big|\\ h\in Z(\pi_1 M),\, a_i^{p_i} h^{q_i}, \,a_1\ldots a_r[u_1, v_1]\ldots [u_g, v_g] = h^b \big>  
\end{multline*}
where $b= -\sum q_i / p_i$ since the Euler number vanishes. The map $\zeta$ is given by
\[h\mapsto \prod_j p_j, \quad a_i\mapsto -q_i\prod_{j\neq i} p_j \] 
and by sending $u_i, v_i$ to zero. Similarly define 
\begin{multline*}
\pi_1 M = \big< a_1,\ldots, a_r, u_1, v_1, \ldots, u_g, v_g, h\,\big|\\ h\in Z(\pi_1 M),\, a_i^{p_i} h^{q'_i}, \,a_1\ldots a_r[u_1, v_1]\ldots [u_g, v_g] = h^{b'} \big>  
\end{multline*}
where $b'= -\sum q'_i / p_i$. Again the map $\zeta'$ is given by
\[h\mapsto \prod_j p_j, \quad a_i\mapsto -q'_i\prod_{j\neq i} p_j \] 
Note that by construction  we have $q_i \equiv \kappa q'_i$ modulo $p_i$ for every $i$, for $\kappa\in\widehat\Z {}^{\!\times}$ congruent to $k$ modulo the order of $\varphi$. See Proposition 5.1 of \cite{hempel14} for the translation of data from surface bundles to Seifert invariants. Let $\rho_i\in\widehat\Z$ be such that $q_i = \kappa q'_i + \rho_i p_i$. Now by Proposition \ref{orbexauto} there is an automorphism $\psi$ of the free group on the generators $\{a_i,u_i,v_i\}$ which sends each $a_i$ to a conjugate of $a_i^\kappa$, sends every other generator to a conjugate of a power of itself, and sends 
\[a_1\ldots a_r[u_1, v_1]\ldots [u_g, v_g]\mapsto \left(a_1\ldots a_r[u_1, v_1]\ldots [u_g, v_g]\right)^\kappa\]
Now define $\Psi\colon \widehat{\pi_1 M}\to\widehat{\pi_1 M}$ by
\[h\mapsto h, \quad a_i\mapsto \psi(a_i)h^{-\rho_i}, \quad u_i\mapsto \psi(u_i), \quad v_i\mapsto \psi(v_i) \]
The reader may readily check that this map $\Psi$ is a well-defined isomorphism of profinite groups and that $\hat\zeta = \hat \zeta' \Psi$ as required.
\end{proof}
\begin{theorem}\label{thmMCG}
If $M$ and $N$ are non-homeomorphic closed fibred graph manifolds, and $\Psi\colon\widehat{\pi_1 M}\to\widehat{\pi_1 N}$ is any isomorphism, then $\Psi$ does not preserve any fibrations of $M$ and $N$, even weakly. Hence if $\phi_1$ and $\phi_2$ are piecewise periodic automorphisms of a closed surface group $\pi_1 S$ which are not conjugate in $\Out(\pi_1 S)$, then $\phi_1$ is not conjugate to any power of $\phi_2$ in $\Out(\widehat{\pi_1 S})$.
\end{theorem}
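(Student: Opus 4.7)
The plan is to prove the first (manifold-level) statement in contrapositive form: assuming an isomorphism $\Psi\colon\widehat{\pi_1 M}\to\widehat{\pi_1 N}$ weakly preserves fibrations with scale factor $\kappa^{\rm fib}\in\widehat\Z^\times$, I will show that the classifying data of Theorem \ref{GMrigid} collapses to the trivial case $\kappa=1$, forcing $M\iso N$; the mapping-class statement then follows by applying this to the mapping tori of $\phi_1,\phi_2$. By Theorem \ref{decompfixed} I may assume $\Psi$ induces a JSJ-graph isomorphism $\varphi\colon X\to Y$ with peripheral-structure-preserving restrictions $\Psi_v\colon\widehat{\pi_1 M_v}\to\widehat{\pi_1 N_{\varphi(v)}}$, and Theorem \ref{Wilkeswbdy} attaches to each $\Psi_v$ a pair $(\lambda_v,\mu_v)\in(\widehat\Z^\times)^2$, namely multiplication by $\lambda_v$ on the fibre centre and an exotic automorphism of type $\mu_v$ on the base orbifold; Theorem \ref{GMrigid} further provides a global $\kappa$ with $\mu_v^{-1}\lambda_v\equiv\kappa^{\pm 1}\pmod{n_v}$ according to a bipartite colouring.

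First I would observe that weak preservation applied to the central fibre generator $h_v$ forces $\lambda_v=\kappa^{\rm fib}$ exactly. Writing $\Psi(h_v)=(h'_v)^{\lambda_v}$ and $\hat\zeta_M(h_v)=N_v=\prod_i p_i^{(v)}$, with the analogous formula on the $N$-side since the two base orbifolds are identified, the relation $\hat\zeta_N(\Psi(h_v))=\kappa^{\rm fib}\hat\zeta_M(h_v)$ becomes $\lambda_v N_v=\kappa^{\rm fib}N_v$; since $N_v$ is a nonzero positive integer and hence a non-zero-divisor in $\widehat\Z=\prod_p\Z_p$, this yields $\lambda_v=\kappa^{\rm fib}$ for every $v$.

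The heart of the argument is an edge-torus matrix computation. On each edge $e=(v,w)$ the restriction $\Psi|_{T_e}$ is upper-triangular of the form $\bigl(\begin{smallmatrix}\lambda_v&\rho_j\\0&\mu_v\end{smallmatrix}\bigr)$ in $v$-coordinates and similarly in $w$-coordinates with parameters $(\lambda_w,\mu_w,\rho_k)$. The two expressions must be intertwined by the gluings via $\Psi_v=G'\Psi_w G^{-1}$, where $G$ and $G'$ are the gluing matrices on $M$ and $N$ (both of determinant $-1$ by orientability, with the same $(2,1)$ entry $\gamma=\gamma(e)$ by Theorem \ref{GMrigid}). Expanding, the $(1,1)$-entry simplifies to $\mu_w(-\alpha'\delta'+\beta'\gamma)=\mu_w$ via $\det G'=-1$, giving $\lambda_v=\mu_w$; symmetrically the $(2,2)$-entry gives $\mu_v=\lambda_w$; and the $(2,1)$-entry gives $\delta'\mu_w\equiv\lambda_w\delta\pmod\gamma$. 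Since the JSJ graph is connected, combining these with the already-established $\lambda_\bullet=\kappa^{\rm fib}$ forces $\mu_v=\kappa^{\rm fib}$ at every vertex, so every Hempel-pair scale factor $\mu_v^{-1}\lambda_v$ equals $1$; cancelling the unit $\kappa^{\rm fib}$ mod $\gamma$ in the $(2,1)$ relation gives $\delta'\equiv\delta\pmod\gamma$ on every edge. All the conditions of Theorem \ref{GMrigid} now hold with $\kappa=1$, which by the classification forces $M\iso N$.

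The main obstacle is the $(1,1)$-entry cancellation: the identity $-\alpha'\delta'+\beta'\gamma=1$ that extracts the clean relation $\lambda_v=\mu_w$ depends essentially on the orientability constraint $\det G'=-1$, so the most delicate part of the write-up will be keeping sign and change-of-basis conventions consistent when simultaneously translating between $v$- and $w$-coordinates on both $M$ and $N$. For the mapping-class consequence, a conjugacy $\phi_1\sim\phi_2^\kappa$ in $\Out(\widehat{\pi_1 S})$ lifts to an isomorphism of the mapping-torus profinite groups weakly preserving fibrations, so by the first part the mapping tori are homeomorphic; the triviality of the Hempel scale factor at every Seifert piece then identifies the periodic monodromies on each piece up to conjugation, and the matching $\delta'\equiv\delta\pmod\gamma$ stitches these across edge tori into the required conjugacy $\phi_1\sim\phi_2$ in $\Out(\pi_1 S)$.
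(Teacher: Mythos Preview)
Your overall strategy matches the paper's: apply weak fibre preservation to each Seifert-fibre class $h_v$ to relate $\lambda_v$ to $\kappa^{\rm fib}$, combine with the edge relation $\lambda_v=\mu_w$ for adjacent vertices, and conclude that every Hempel scale factor is trivial. The edge-matrix computation you carry out reproduces what the paper simply cites from the proof of Theorem~10.1 of \cite{Wilkes16}, which is fine and more self-contained.

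There is, however, a genuine error in your first step. You assert $\hat\zeta_M(h_v)=\prod_i p_i^{(v)}=\hat\zeta_N(h'_v)$ ``since the two base orbifolds are identified''. That formula for $\zeta(h)$ is valid for a \emph{closed} Seifert fibre space which is itself a surface bundle, but it fails for the restriction of a global fibration $\zeta_M$ to a single JSJ piece of a graph manifold: the value $\zeta_M(h_v)$ depends on how the piece sits inside the whole manifold, not just on the base orbifold of $M_v$. For instance, if $M_v=S\times\sph{1}$ has no cone points your formula gives $\zeta_M(h_v)=1$, whereas in reality this integer can be arbitrary and nonzero. So you cannot cancel $N_v$ to obtain $\lambda_v=\kappa^{\rm fib}$ exactly.

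The paper's version of this step is: from $\lambda_v\,\zeta_N(h'_v)=\kappa^{\rm fib}\,\zeta_M(h_v)$, with both $\zeta$-values nonzero integers, one gets $\lambda_v/\kappa^{\rm fib}\in\Q^\times\cap\widehat\Z{}^\times=\{\pm1\}$ by Lemma~2.2 of \cite{Wilkes16}. Thus $\lambda_v=\pm\kappa^{\rm fib}$, and the sign ambiguity is harmless: combined with $\lambda_v=\mu_w$ one still obtains $\mu_v^{-1}\lambda_v=\pm1$ everywhere, which forces $M\iso N$. Once you replace the cancellation by this rationality argument, your proof of the first statement goes through. For the mapping-class consequence, the paper relies only on the equivalence (discussed just before the theorem) between conjugacy of $\phi_1$ with $\phi_2^\kappa$ in $\Out(\widehat{\pi_1 S})$ and weak fibre preservation; your proposed stitching of local conjugacies across edge tori goes beyond what is written there and would need more care to make precise.
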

\begin{rmk}
Here the `piecewise periodic' means that the corresponding fibred manifold is a graph manifold. The author is not aware of a standard term for this.
\end{rmk}
\begin{proof}
For suppose $(M, \zeta_M)$ and $(N, \zeta_N)$ are fibred graph manifolds and $\Psi$ is an isomorphism of the profinite completions of their fundamental groups weakly preserving the fibration. Suppose $M$ and $N$ are not homeomorphic. Let the JSJ decompositions be $(X,M_\bullet)$ and $(Y,N_\bullet)$ and denote the regular fibre of a vertex group $\pi_1 M_x$ or $\pi_1 N_x$ by $h_x$. Note that any fibre surface in a graph manifold must cut the Seifert fibres of all vertex spaces transversely, so every Seifert fibre survives as a non-trivial element of $\Z$ under the map $\zeta_M$ (or $\zeta_N$). Also, for an orientable fibred graph manifold all the base orbifolds of major pieces must be orientable. We may therefore apply the analysis in the proof of Theorem 10.1 of \cite{Wilkes16} to conclude that there is a graph isomorphism $\psi\colon X\to Y$ and numbers $\lambda, \mu\in\widehat\Z {}^{\!\times}$ such that for adjacent vertices $x$ and $y$ are of $X$, we have (where $\sim$ denotes conjugacy in $\widehat{\pi_1 N}$):
\[ \Psi(h_x) \sim h_{\psi(x)}^\lambda, \quad \Psi(h_y) \sim h_{\psi(y)}^\mu \]
(or vice versa). Since $M$ and $N$ are not homeomorphic the ratio $\lambda/\mu$ is not equal to $\pm 1$. Now if $\Psi$ weakly preserves the fibre, then we have equations 
\[ \kappa \zeta_M(h_x) = \zeta_N(\Psi(h_x)) = \zeta_N(h_{\psi(x)}^\lambda) = \lambda\zeta_N(h_{\psi(x)})  \]
so that $\lambda/\kappa$, when multiplied by a non-zero element of \Z, remains in \Z. Thus by standard theory (for example Lemma 2.2 of \cite{Wilkes16}), $\kappa=\pm \lambda$. Applying the same argument to $y$ gives $\kappa=\pm \mu$, so that $\lambda/\mu=\pm 1$, giving a contradiction.
\end{proof}
\begin{clly}
A closed fibred graph manifold of first Betti number one is profinitely rigid.
\end{clly}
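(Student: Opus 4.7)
The plan is to show that any isomorphism $\Psi\colon\widehat{\pi_1 M}\to\widehat{\pi_1 N}$ automatically weakly preserves the fibration, and then invoke Theorem \ref{thmMCG} to conclude that $M$ and $N$ are homeomorphic.

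First I would argue that $N$ belongs to the same class of manifolds as $M$. By Theorem \ref{decompfixed} together with the recognition of vertex-space types used earlier, $N$ is a closed graph manifold. The first Betti number $b_1$ can be read off from $\widehat{\pi_1(-)}^{ab}$ as the $\widehat\Z$-rank of the maximal torsion-free quotient, so $b_1(N)=b_1(M)=1$. Since $M$ is fibred, by Jaikin-Zapirain's theorem that fibring is profinite (\cite{Jaikin17}) the manifold $N$ is also fibred, with some fibration map $\zeta_N\colon\pi_1 N\twoheadrightarrow\Z$.

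Next I would show the fibration is preserved weakly for free. Any continuous homomorphism $\widehat{\pi_1 M}\to\widehat\Z$ factors through the abelianization and kills torsion, so it factors through the maximal torsion-free abelian quotient, which by the assumption $b_1(M)=1$ is isomorphic to $\widehat\Z$. Hence every continuous surjection $\widehat{\pi_1 M}\twoheadrightarrow\widehat\Z$ is a unit multiple of $\hat\zeta_M$, and the same holds for $N$. Applying this to the surjection $\hat\zeta_N\circ\Psi$ yields some $\kappa\in\widehat\Z{}^{\!\times}$ with $\hat\zeta_N\circ\Psi=\kappa\cdot\hat\zeta_M$, i.e.\ $\Psi$ weakly preserves the fibration in the sense of the definition preceding Theorem \ref{thmMCG}.

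Finally, Theorem \ref{thmMCG} asserts that if $M$ and $N$ were not homeomorphic then no isomorphism of profinite fundamental groups could weakly preserve the fibrations. This contradicts the previous paragraph, so $M$ and $N$ must be homeomorphic, proving profinite rigidity. The only substantive step is the uniqueness of surjections to $\widehat\Z$ from a profinitely completed fibred $3$-manifold group with $b_1=1$, which is elementary once one notes the factoring through the torsion-free abelianization; the real content of the corollary is entirely contained in Theorem \ref{thmMCG}.
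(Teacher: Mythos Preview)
Your argument is correct and follows essentially the same route as the paper: show $N$ is also a closed fibred graph manifold (the paper cites Theorem 7.1 of \cite{Wilkes16} and Jaikin-Zapirain for this), use $b_1=1$ to force any isomorphism to weakly preserve the fibration, then apply Theorem \ref{thmMCG}. One minor point: Theorem \ref{decompfixed} as stated already assumes both $M$ and $N$ are graph manifolds, so for the first step you really want Theorem 7.1 of \cite{Wilkes16} (as the paper invokes), not just \ref{decompfixed}; otherwise the argument is the same.
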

\begin{proof}
By Theorem 7.1 of \cite{Wilkes16} and Theorem 1.1 of \cite{Jaikin17} any other manifold with the same profinite fundamental group as the given manifold is also a closed fibred graph manifold. If the first Betti number is 1 then there is a unique map to $\widehat\Z$ so that any isomorphism weakly preserves the fibration. Theorem \ref{thmMCG} now gives the result.
\end{proof}
\begin{rmk}
There is another proof of this corollary, which we will now sketch, which is more closely related to Theorem \ref{GMrigid}. For consider a closed fibred graph manifold $M$ of first Betti number one. Then there is an essentially unique homomorphism $\zeta\colon\pi_1 M \to \Q$ which, as noted above, does not vanish on the regular fibre of any Seifert-fibred piece of $M$. This in turn implies that the JSJ graph of $M$ is a tree and that the base orbifolds of all pieces are spheres with cone points with discs removed. 

Consider piece $M_l$ corresponding to a leaf $l$ in the JSJ graph. We will show that the total slope at $M_l$ is non-zero, so that $M$ is profinitely rigid by Theorem \ref{GMrigid}. Let $e$ be the edge emanating from $l$, let $h$ be the homotopy class of a regular fibre of $M_l$, and let $h'$ be the regular fibre of $d_1(e)$. If $\pi_1 M_l$ has a standard form presentation
\[\big< a_1,\ldots, a_r, h\,\big|\, a_i^{p_i} h^{q_i}, h \text{ central}\, \big>\] 
then, if $e_0 = \left(a_1 \cdots a_r\right)^{-1}$ we have 
\[h' = e_0^{-\gamma(e)} h^{\delta(e)} = \left(a_1 \cdots a_r\right)^{\gamma}h^\delta \]
where $\gamma\neq 0$. Without loss of generality suppose $\zeta(h)=1$. Then $\zeta(a_i) = -q_i/p_i$ and 
\[\zeta(h') = -\gamma \sum \frac{q_i}{p_i} + \delta = \gamma \tau(l)\]
Since $\zeta(h')\neq 0$ we find that the total slope at $l$ is non-zero as claimed.
\end{rmk}

Theorem \ref{thmMCG} shows that Theorem \ref{GMrigid}, while finding examples of non-rigid fibred manifolds, leaves open the possibility that Question \ref{QnPrMCG} could have a negative answer for all infinite order mapping classes.  It also raises the possibility that even if profinite rigidity for hyperbolic 3-manifolds fails, the weaker statement about mapping class groups could still hold.

\begin{rmk}
It is curious to compare the directions of the proofs in this section and in \cite{BRW16}. In the latter paper, strong properties of $\Out(F_2)$ (`congruence omnipotence for elements of infinite order') were used to deduce that independent mapping classes $\phi_1, \phi_2\in\Out(F_2)$ were not conjugate to any power of each other in $\Out(\widehat F_2)$, so that no isomorphism of the profinite completions of once-punctured torus bundles could (in our terminology) weakly preserve the fibration. Meanwhile the assumption that the first Betti number is 1 showed that any such isomorphism must weakly preserve the fibration. In this way \cite{BRW16} obtained a profinite rigidity theorem for once-punctured torus bundles.

In our situation, the direction is quite different; we investigated profinite completions of graph manifolds, and in doing so learnt about conjugacy of certain elements in $\Out(\widehat{\pi_1 S})$. One rather suspects that the results about the mapping class group should come first, but seem to be lacking except in the case described above.
\end{rmk}

\bibliographystyle{alpha}
\bibliography{graphmflds}
\end{document}